\documentclass[11pt]{amsart}
\usepackage{amsmath,amsthm, amscd, amssymb, amsfonts}

\usepackage[]{color} 

\usepackage{graphicx} 

\usepackage{mathrsfs} 
\usepackage{figsize}
\usepackage{algorithm}
\usepackage{algpseudocode}
\usepackage{enumerate} 

\usepackage{float} 

\usepackage{tikz} 

\usepackage{tkz-berge} 
\usepackage{tkz-graph}
\usetikzlibrary{arrows}
\usetikzlibrary{calc}
\usetikzlibrary{matrix}
\usetikzlibrary{decorations.pathreplacing}  


\usepgflibrary{shapes.geometric} 
\usetikzlibrary[topaths]
\usetikzlibrary{positioning}
\usetikzlibrary{arrows}  
\usetikzlibrary{graphs,graphs.standard}

\usepackage{tabularx}

\theoremstyle{plain}
\newtheorem{theorem}{Theorem}[section]
\newtheorem{corollary}[theorem]{Corollary}
\newtheorem{proposition}[theorem]{Proposition}
\newtheorem{lemma}[theorem]{Lemma}
\newtheorem{definition}{Definition}[section]  
 
\newtheorem{remark}[theorem]{Remark}
\newtheorem{example}[theorem]{Example}

\usepackage[colorlinks]{hyperref}

\def\F{\mathbb{F}}

 \def\ad{\mathrm{ad}}
 \def\der{\mathrm{Der}}
 \def\ch{\mathrm{char}}

\def\sol{\mathrm{sol}}

\title[The solvable Graph]{The solvable Graph of a finite-dimensional Lie Algebra}

\thanks{...}

\author{David Towers}

 \address{Lancaster University, School of Mathematical Sciences, 
 }
  \email{d.towers@lancaster.ac.uk
 }

 \author{Ismael Gutierrez}
 \address{Universidad del Norte,  Departamento de Matem\'aticas y Estad\'istica, Km 5 via a Puerto Colombia, Barranquilla, Colombia.}
  \email{isgutier@uninorte.edu.co
 }



\author{Luis Fernandez}
\address{Universidad del Norte,  Departamento de Matem\'aticas y Estad\'istica, Km 5 via a Puerto Colombia, Barranquilla, Colombia.}
   \email{lfernandeze@uninorte.edu.co}

\subjclass[2010]{17B30, 17B45, 05C40, 05C25}
\keywords{Graphs associated with Lie Algebras; solvable Lie Algebra; solvabilizer}

\begin{document} 


\begin{abstract}
We introduce and investigate the solvable graph $\Gamma_\mathfrak{S}(L)$ of a finite-dimensional Lie algebra $L$ over a field $F$. The vertices are the elements outside the solvabilizer $\sol(L)$, and two vertices are adjacent whenever they generate a solvable subalgebra. After developing the basic properties of solvabilizers and $S$-Lie algebras, we establish divisibility conditions, coset decompositions, and degree constraints for solvable graphs. Explicit examples, such as $\mathfrak{sl}_2(\mathbb{F}_3)$, illustrate that solvable graphs may be non-connected, in sharp contrast with the group-theoretic setting. We further determine the degree sequences of $\Gamma_\mathfrak{S}(\mathfrak{gl}_2(\F_q))$ and $\Gamma_\mathfrak{S}(\mathfrak{sl}_2(\F_q))$, highlighting how spectral types of matrices dictate combinatorial patterns. An algorithmic framework based on GAP and SageMath is also provided for practical computations. Our results reveal both analogies and differences with the nilpotent graph of Lie algebras, and suggest that solvable graphs encode structural invariants in a genuinely new way. This work opens the door to a broader graphical approach to solvability in Lie theory.

\end{abstract}

\maketitle


\section{Introduction}

The systematic study of the solvable and non-solvable graphs of finite groups began in 2013, when Hai-Reuven introduced the non-solvable graph $\mathcal{S}_G$ of a group $G$ and proved that, for any non-solvable finite group, the induced subgraph on $G\setminus\sol(G)$ is never regular and has diameter at most $2$ \cite{HaiReuven}. Six years later, Bhowal, Nongsiang, and Nath investigated the complementary solvable graph $\Gamma_\mathcal{S}(G)$, showing that it is neither a tree nor $n$-partite, and can never be planar, toroidal, or projective. They also established a lower bound on its clique number in terms of the group’s \emph{solvability degree} \cite{Bhowal2019}. In 2020, Akbari \textit{et al.} proved that every finite simple group yields a \emph{split} solvable graph and that the degree pattern of $\Gamma_\mathcal{S}(G)$ can distinguish various classical groups \cite{Akbari2020}. A comprehensive monograph by Yadav and collaborators followed in 2021, elucidating the interplay between $\mathcal{S}_G$, the solvable radical $R(G)$, and invariably generating sets \cite{Yadav2021}. Most recently, Akbari embedded both $\Gamma_\mathcal{S}(G)$ and $\mathcal{S}_G$ into a broader ``supergraph'' framework, improving the universal diameter bound to $2$ for all non-solvable groups \cite{Akbari2024}.

These developments highlight a unifying theme: algebraic structure can often be encoded in the combinatorial properties of graphs attached to groups. In this spirit, it is natural to ask whether similar constructions are possible for Lie algebras. Although finite groups and finite-dimensional Lie algebras differ substantially in structure, both possess solvable radicals, centralizers, and natural generating subsets, suggesting that their corresponding ``solvable graphs'' may display analogous phenomena. 

More recently, attention has also turned to the Lie-theoretic counterpart of these constructions. In \cite{TGF}, the nilpotent graph of a finite-dimensional Lie algebra was introduced and systematically studied, establishing a close parallel with the solvable and non-solvable graphs of groups. It was shown there that the nilpotentizer behaves well with respect to direct sums, that the graph encodes structural properties such as connectivity and isolation of vertices, and that explicit computations for triangular matrix algebras reveal highly regular decompositions. This complementary perspective highlights the robustness of the graphical approach to solvability and nilpotency across different algebraic settings.

The present work initiates a systematic investigation of the \emph{solvable graph} of a finite-dimensional Lie algebra $L$, where each vertex represents an element of $L$ and adjacency encodes the failure of the subalgebra generated by two elements to be solvable. We establish several basic structural properties of these graphs, including divisibility conditions on the sizes of solvabilizers, constraints on degrees, and non-connectedness phenomena. Along the way, we demonstrate explicit examples for small Lie algebras such as $\mathfrak{sl}_2(\F_3)$, showing how their solvable graphs decompose into multiple components, and we provide algorithmic tools for computing such graphs in practice. 

Beyond these foundational results, we also prove that in characteristic zero a simple $S$-Lie algebra must be abelian of dimension one, and we formulate a strong divisibility conjecture supported by extensive computational evidence. Taken together, these results reveal both striking parallels and crucial differences with the group-theoretic case, and open the door to further exploration of solvable and non-solvable graphs in the Lie algebraic setting.

The present article should be viewed as complementary to our earlier work on the \emph{nilpotent graph} of a finite-dimensional Lie algebra \cite{TGF}. In both settings, one attaches a combinatorial structure to an algebra by considering two-generated subalgebras and by testing for structural properties such as nilpotency or solvability. The analogy is striking: in the nilpotent graph, connectivity phenomena and coset decompositions play a central role, while in the solvable graph, we observe parallel divisibility conditions and degree constraints. At the same time, there are key differences: for instance, solvable graphs may fail to be connected even in very small examples such as $\mathfrak{sl}_{2}(\F_3)$, and the hierarchy of divisibility relations for solvabilizers has no direct analogue in the nilpotent case. This illustrates that the two graph perspectives are genuinely complementary and that the solvable graph captures new aspects of Lie algebra structure.

In Section~2, we introduce the basic definitions of solvabilizers and the solvable graph of a Lie algebra, and we establish their first structural properties, such as coset decompositions and elementary divisibility results. 

Section~3 develops the general theory of $S$-Lie algebras, including constraints on centralizers and solvabilizers, and culminates with the proof that in characteristic zero every simple $S$-Lie algebra is one-dimensional abelian. 

In Section~4 we investigate the global properties of solvable graphs, including degree bounds, connectivity issues, and examples computed for small Lie algebras such as $\mathfrak{sl}_2(\F_3)$. 

Finally, Section~5 outlines a strong divisibility conjecture, supported by computational evidence, and discusses directions for further research on the interplay between Lie algebra structure and solvable graphs.

\section{Preliminaries}

\subsection*{Basics on Lie Algebras.}
Let $L$ be a finite-dimensional Lie algebra over a field $F$. A \emph{Lie algebra} is a vector space $L$ over $F$, equipped with a bilinear operation $[\cdot,\cdot]: L \times L \to L$, called the \emph{Lie bracket}, satisfying the following axioms:
\begin{enumerate}
    \item \textbf{Alternating property:} $[x,x] = 0$ \ $\forall  x \in L$;
    \item \textbf{Jacobi identity:} $[x,[y,z]] + [y,[z,x]] + [z,[x,y]] = 0$ $\forall x, y, z \in L$.
\end{enumerate}
These imply \textbf{anticommutativity}: $[x,y] = -[y,x]$ for all $x,y \in L$.

A Lie algebra is said to be \emph{abelian} if $[x,y] = 0$ for all $x,y \in L$. Every one-dimensional Lie algebra is abelian. The \emph{dimension} of a Lie algebra is its dimension as a vector space over $F$. A \emph{subalgebra} of $L$ is a subspace closed under the Lie bracket. If $A, B$ are subspaces of $L$, then $[A,B]$ denotes the subspace spanned by all $[a,b]$ with $a \in A$, $b \in B$. 

 Let $F$ be a field. Standard examples include: 
\begin{itemize}
    \item $\mathfrak{gl}_n(F)$, the Lie algebra of all $n \times n$ matrices over $F$ with bracket $[x,y] = xy - yx$;
   \item $\mathfrak{t}_n(F)$, the subalgebra of upper triangular matrices;
    \item $\mathfrak{sl}_n(F)$, the subalgebra of traceless $n \times n$ matrices over $F$ .
   \item $\mathfrak{so}_n(F)$, the subalgebra of skew-symmetric $n \times n$  matrices relative to the standard quadratic form.
    \item $\mathfrak{sp}_{2m}(F)$, the symplectic Lie algebra over $F$ .
\end{itemize}

A subspace $I$ of $L$ is called an \emph{ideal} if $[x,y] \in I$ for all $x \in I$, $y \in L$, that is, $[I,L] \subseteq I$. Intersections, sums, and Lie products of ideals are again ideals.

The \emph{derived series} of $L$ is defined recursively by
\begin{align*}
    L^{(0)} &= L, \\
    L^{(k+1)} &= [L^{(k)}, L^{(k)}].
\end{align*}
The \emph{lower central series} is given by
\begin{align*}
    L^1 &= L, \\
    L^{k+1} &= [L^k, L].
\end{align*}
A Lie algebra $L$ is called \emph{solvable} if $L^{(r)} = 0$ for some $r$, and \emph{nilpotent} if $L^k = 0$ for some $k$. Every nilpotent Lie algebra is solvable, but the converse is not true.

Examples: Let $F$ be a field.
\begin{itemize}
    \item If $\ch(F) \neq 2$ then $\mathfrak{gl}_n(F)$  is non solvable;
    \item If $\ch(F) \neq 2$ then $\mathfrak{sl}_n(F)$ is non solvable;
    \item $\mathfrak{t}_n(F)$ is solvable but not nilpotent;
   \item $\mathfrak{so}_n(F)$ is non solvable;
    \item $\mathfrak{sp}_{2m}(F)$ is non solvable.
\end{itemize}

For $x \in L$, define the \emph{adjoint map} $\ad\, x : L \to L$ by $\ad\, x(y) = [x,y]$. The map $\ad: L \longrightarrow \der(L)$, where $\der(L)$ is the set of derivations of $L$, is called the \emph{adjoint representation} of $L$. The \emph{centralizer} of $x$ in $L$, denoted as $C_L(x)$ or simply $C(x)$ when $L$ is clear from context, is defined as: 
\[C_L(x) = \{y\in L\mid [x,y] = 0 \}\] 
where $[x,y]$ denotes the Lie bracket of $x$ and $y$.

The \emph{center} of $L$ is $Z(L) = \{x \in L \mid [x,L] = 0\}$. Define the upper central series recursively as
\begin{align*}
    Z_0(L) &= 0, \\
    Z_i(L)/Z_{i-1}(L) &= Z(L/Z_{i-1}(L)), \quad i \geq 1.
\end{align*}
The union $\bigcup_{i \geq 0} Z_i(L)$ is called the \emph{hypercentre}, denoted $Z^*(L)$. For finite-dimensional $L$, this series terminates.




\subsection*{Basics on Graph Theory.}

We begin by recalling some basic notions from graph theory that will be used throughout this paper. A \emph{graph} $\Gamma$ is an ordered pair $(V,E)$, where $V$ is the set of vertices and $E$ is the set of unordered pairs of distinct elements of $V$, called edges. If ${u,v} \in E$, then $u$ and $v$ are said to be \emph{adjacent}, or \emph{neighbors}. 
A graph is called \emph{simple} if it has neither loops nor multiple edges; in what follows, we shall always assume graphs to be simple. The \emph{degree} of a vertex $v$, denoted $\deg(v)$, is the number of vertices adjacent to $v$. A graph is said to be \emph{$k$-regular} if every vertex has degree $k$, i.e., $\deg(v)=k$ for all $v \in V$.

A graph is \emph{complete} if every pair of distinct vertices is adjacent; the complete graph on $n$ vertices is denoted by $K_n$. A graph is \emph{connected} if there exists a path between any two of its vertices, and a \emph{connected component} is a maximal connected subgraph. 
The \emph{complement} of a graph $\Gamma$, denoted by $\bar{\Gamma}$, is defined on the same vertex set $V$ with edges ${u,v}$ whenever $u$ and $v$ are not adjacent in $\Gamma$.

For undefined terms and further background in graph theory, the reader is referred to \cite{Diestel} and \cite{Nora}.

\section{The solvabilizer of a Lie algebra}
 
In this paper, $\mathfrak{S}$ and $\mathfrak{S}_c$ denote the class of all finite-dimensional solvable Lie algebras and the class of all finite-dimensional solvable Lie algebras of the solvability degree $c$, respectively.

\begin{definition}
Let $L$ be a finite-dimensional Lie algebra over a field $F$. 
\begin{enumerate}
    \item For $h\in L$ we define  the \emph{solvabilizer} of $h$ in $L$ as follows:
\begin{equation*}
\sol_L(h) := \{x\in L\mid  \langle h,x\rangle \ \text{is a solvable Lie subalgebra of} \  L \}.
\end{equation*}

\item The solvabilizer of $L$ is defined by:
\begin{equation*}
\sol(L) := \{x\in L\mid  \langle h,x\rangle \ \text{is solvable for all} \ h\in L\}.
\end{equation*}
\end{enumerate}
\end{definition}

Let $L$ be a finite-dimensional Lie algebra defined over a field $F$. We denote with $R(L)$ the maximal solvable ideal
of $L$, and we call it \emph{the solvable radical of $L$}. 

\begin{example}
\begin{enumerate}
 \item $\sol(\mathfrak{sl}_2(F_2))= \mathfrak{sl}_2(F_2)$. 
 \item $\sol(\mathfrak{gl}_2(F_2))= \mathfrak{gl}_2(F_2)$. 
 \item If $\ch(F) \neq 2$, then $\sol(\mathfrak{sl}_2(F))= 0$.
 \item If $\ch(F) \neq 2$, then $\sol(\mathfrak{gl}_2(F)) = Z(\mathfrak{gl}_2(F)) \neq 0$.
\end{enumerate}
\end{example}

\begin{remark}\label{sl2(3)-basis}
Some immediate properties:
\begin{enumerate}
\item $\langle h\rangle \subseteq \sol_L(h)$,

\item $\sol(L) = \bigcap_{h\in L} \sol_L(h)$,

\item $\sol_L(h)$ is in general not a subalgebra of $L$.  Let  $F$ be any field with $\mathrm{char} F\neq 2$ and $L=\mathfrak{sl}_{2}(F)$. A basis for $L$ is $B= \{e, f, h\}$, where:
\[  e \;=\;   \begin{pmatrix} 
0 & 1 \\ 0 & 0 
\end{pmatrix},
  \quad
  f \;=\;   \begin{pmatrix} 
  0 & 0 \\ 1 & 0 
  \end{pmatrix},
  \quad
  h \;=\;   \begin{pmatrix} 
  1 & 0 \\ 0 & -1 
  \end{pmatrix}.\]
Their Lie brackets are $[h,e]=2e,\; [h,f]=-2f,\; [e,f]=h$. Note that $\langle h, e\rangle$ and $\langle h, f\rangle$ are both two-dimensional and solvable subalgebras of $L$.  Hence $e, f\in\sol_L(h)$. We now define $s:=e+f$ and $t:=[h,s]=2(e-f)$.
Then $[s,t]=[e+f,e-f]=-2h$, so the set $\{h,s,t\}$ spans the whole algebra $L$. Consequently $\langle h, s \rangle = L$ is not solvable, whence $s\notin\sol_L(h)$. Therefore, $\sol_L(h)$ is not closed under addition; hence, it is not a Lie subalgebra of $L$.
\end{enumerate}
\end{remark}


\begin{theorem}\cite[Theorem 2.1]{Thompson}\label{Thompson}
Let $L$ be a finite-dimensional Lie algebra defined over a field $F$ of characteristic zero. Then  $R(L)=\sol(L)$.
\end{theorem}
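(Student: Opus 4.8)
The plan is to prove the two inclusions separately. The forward inclusion $R(L)\subseteq\sol(L)$ is elementary and holds in any characteristic, while the reverse inclusion $\sol(L)\subseteq R(L)$ carries all of the characteristic-zero content. For the forward inclusion I would fix $x\in R(L)$ and an arbitrary $h\in L$, and set $M=\langle h,x\rangle$. Since $R(L)$ is a solvable ideal, $M\cap R(L)$ is an ideal of $M$ that is solvable (being a subalgebra of $R(L)$), while the quotient $M/(M\cap R(L))\cong (M+R(L))/R(L)$ is the image of $M$ in $L/R(L)$. Because $x\in R(L)$, this image is generated by the single element $\bar h$ and is therefore abelian, hence solvable. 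An extension of a solvable algebra by a solvable algebra is solvable, so $M$ is solvable and $x\in\sol_L(h)$; as $h$ was arbitrary, $x\in\sol(L)$.

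For the reverse inclusion I would first reduce to the semisimple case. Passing to $\bar L=L/R(L)$, which is semisimple since $\ch F=0$, one checks that the image of $\sol(L)$ lands in $\sol(\bar L)$, because any $\langle \bar x,\bar y\rangle$ is a homomorphic image of the solvable algebra $\langle x,y\rangle$. Thus it suffices to prove $\sol(\bar L)=0$ for $\bar L$ semisimple, for then $\bar x=0$ forces $x\in R(L)$. The heart of the argument is a trace computation using the Killing form $\kappa(a,b)=\mathrm{tr}(\ad a\,\ad b)$, which is nondegenerate on a semisimple algebra. If $x\in\sol(\bar L)$, then for every $y$ the subalgebra $\langle x,y\rangle$ is solvable, so by Cartan's criterion (equivalently, by Lie's theorem applied to $\ad_{\bar L}\langle x,y\rangle$) the operator $\ad[x,y]$ is nilpotent; hence $\kappa([x,y],[x,y])=\mathrm{tr}\big((\ad[x,y])^2\big)=0$. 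Polarizing gives $\kappa([x,y],[x,y'])=0$ for all $y,y'$, that is, $\mathrm{im}(\ad x)=[x,\bar L]$ is totally isotropic for $\kappa$. Invariance of $\kappa$ yields $(\mathrm{im}\,\ad x)^{\perp}=\ker(\ad x)$ (indeed $\kappa(z,[x,y])=\kappa([z,x],y)$, so $z\perp\mathrm{im}(\ad x)$ iff $[z,x]=0$ by nondegeneracy), and total isotropy then forces $\mathrm{im}(\ad x)\subseteq\ker(\ad x)$, i.e. $(\ad x)^2=0$. In particular $x$ is ad-nilpotent.

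To finish, I would invoke the Jacobson--Morozov theorem: a nonzero ad-nilpotent $x$ embeds in an $\mathfrak{sl}_2$-triple $(x,h_0,y)$, so that $\langle x,y\rangle=\mathrm{span}\{x,h_0,y\}$ is a copy of $\mathfrak{sl}_2$ and hence non-solvable. This contradicts $x\in\sol(\bar L)$, forcing $x=0$ and $\sol(\bar L)=0$. Combining the two inclusions gives $R(L)=\sol(L)$.

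The step I expect to be the main obstacle is the reverse inclusion in the semisimple case, and specifically the passage from ``$\langle x,y\rangle$ solvable for all $y$'' to ``$(\ad x)^2=0$''. A naive route via the Jordan decomposition $x=x_s+x_n$ stalls in the mixed case, where the nilpotent part $x_n$ contaminates the root-space estimates one would use to exhibit an $\mathfrak{sl}_2$ from the semisimple part $x_s$; the Killing-form isotropy argument is precisely what sidesteps this difficulty, since it treats $\ad x$ as a whole. The only remaining care is with the field of definition for Jacobson--Morozov, which I would handle by extending scalars to $\bar F$, noting that semisimplicity and solvability of two-generated subalgebras are preserved under base change in characteristic zero and that non-solvability descends, so no generality is lost.
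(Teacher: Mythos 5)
Your argument is correct, but note that the paper does not prove this statement at all: it is imported wholesale from Guralnick--Kunyavski\u{\i}--Plotkin--Shalev \cite{Thompson}, so there is no internal proof to compare against. What you have written is a genuine self-contained proof, and its two halves line up with things the paper does elsewhere: your forward inclusion $R(L)\subseteq\sol(L)$ is word-for-word the argument of Lemma~\ref{rad} ($\langle x,y\rangle=\langle x,y\rangle\cap R+Fy$ has a solvable ideal of codimension one), and your reduction to the semisimple quotient is the content of the second half of that lemma. The real work is your Killing-form step, and it is sound: Lie's theorem applied to $\ad_{\bar L}\langle x,y\rangle$ gives nilpotency of $\ad[x,y]$, hence $\kappa([x,y],[x,y])=0$; polarization plus invariance and nondegeneracy of $\kappa$ give $(\ad x)^2=0$; and Jacobson--Morozov then produces an $\mathfrak{sl}_2$-triple contradicting $x\in\sol(\bar L)$. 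The one place your write-up is under-justified is the final remark about extending scalars: the assertion that $x\in\sol(\bar L)$ implies $x\in\sol(\bar L\otimes\bar F)$ is not a formal consequence of ``solvability is preserved under base change'' for a \emph{fixed} two-generated subalgebra, since the partner $y'$ produced over $\bar F$ need not be defined over $F$; making this rigorous requires the Zariski-closedness/density argument that the paper itself invokes in its theorem on ideals contained in $\sol(L)$. Fortunately the detour is unnecessary: Jacobson--Morozov holds over an arbitrary field of characteristic zero (Bourbaki, Lie Groups and Lie Algebras, Ch.~VIII, \S11), so the triple $(x,h_0,y)$ can be taken inside $\bar L$ itself and the contradiction is obtained directly. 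With that one adjustment your proof is complete, and arguably a useful addition, since it makes the paper's key external input self-contained.
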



In positive characteristic, this equality may fail, as the following example shows.

\begin{example}\label{char2}
Let $F$ be a field of characteristic two and let $L$ be the Lie algebra over $F$ generated by the matrices $$a=\left( \begin{array}{ccc} 0&0&0\\
0&1&0\\
0&0&1
\end{array}\right),  b=\left( \begin{array}{ccc} 0&1&0\\
0&0&0\\
1&0&0
\end{array}\right),  c=\left( \begin{array}{ccc} 0&0&1\\
1&0&0\\
0&0&0
\end{array}\right)$$ relative to the commutation operator. Then $[a,b]=b$, $[a,c]=c$ and $[b,c]=a$.

\medskip

\noindent {\bf Claim 1: $L$ is simple.}
\medskip

Let $I$ be an ideal of $L$ and let $\alpha a+\beta b+\gamma c \in I$. If $a\in I$, then $I=L$, so suppose $a\notin I$.. Now $[a,\alpha a+\beta b+\gamma c]]=\beta b+\gamma c\in I,$
so $\alpha x\in I$ and $\alpha=0$. Also, $[b,\beta b+\gamma c]=\gamma a\in I$, so $\gamma=0$. Similarly, $\beta=0$.
\medskip

\noindent {\bf Claim 2: $a\in$ sol($L$)}
\medskip

Clearly, $\langle a,\alpha a+\beta b+\gamma c\rangle=\langle a,\beta b+\gamma c\rangle=Fa+F(\beta b+\gamma c)$, which is solvable. 
\medskip

It follows that, if sol($L$) is an ideal of $L$, we must have that sol($L$) $=L$. But $b,c \notin$ sol($L$), since $\langle b,c \rangle=L$, which is not solvable. 
\end{example}

However, we have the next result.

\begin{definition}
A Lie algebra is called {\it classical semisimple} if it is a direct sum of classical simple Lie algebras.    
\end{definition}

\begin{theorem} Let $L$ be a finite-dimensional Lie algebra over an infinite field $F$ of characteristic $p>3$, let $R(L)$ be the solvable radical of $L$ and suppose that $L/R(L)$ is classical semisimple. Then $R(L)$ coincides with the set of elements $y\in L$ with the property that $\langle x,y\rangle$ is solvable for every $x\in L$.
\end{theorem}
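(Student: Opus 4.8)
The statement asserts that $R(L)=\sol(L)$, since the displayed set is exactly the solvabilizer $\sol(L)$. The inclusion $R(L)\subseteq \sol(L)$ holds in any characteristic, and I would record it first: for $y\in R(L)$ and any $x\in L$, the image of $\langle x,y\rangle$ in $L/R(L)$ is generated by the single element $\bar x$, hence is one-dimensional abelian, while $\langle x,y\rangle\cap R(L)$ is solvable; an extension of a solvable algebra by a solvable algebra is solvable, so $\langle x,y\rangle$ is solvable and $y\in\sol(L)$.

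The substance is the reverse inclusion $\sol(L)\subseteq R(L)$, and the plan is to reduce it to the vanishing of the solvabilizer of the semisimple quotient. Writing $\pi\colon L\to S:=L/R(L)$ and taking $y\in\sol(L)$, every $\pi(\langle x,y\rangle)=\langle \pi(x),\pi(y)\rangle$ is solvable as a quotient of a solvable algebra; since $\pi(x)$ ranges over all of $S$, this shows $\pi(y)\in\sol(S)$. Thus it suffices to prove that $\sol(S)=0$ for a classical semisimple $S$ over the infinite field $F$ with $\ch F=p>3$. A short direct-sum lemma---$(a,b)\in\sol(A\oplus B)$ if and only if $a\in\sol(A)$ and $b\in\sol(B)$, proved by testing with $x=(a',0)$ and $x=(0,b')$---further reduces this to the case $S$ classical simple.

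To pass to an algebraically closed field I would exploit the hypothesis that $F$ is infinite. Fix $0\neq y$ and set $d=\dim S$; since a solvable algebra of dimension $\le d$ has derived length $\le d$, the subalgebra $\langle x,y\rangle$ is solvable if and only if $\langle x,y\rangle^{(d)}=0$. Now $\langle x,y\rangle^{(d)}$ is the span of a fixed finite collection of Lie monomials in $x$ and $y$, each of which is a vector-valued polynomial function of the coordinates of $x$; hence $Z:=\{x:\langle x,y\rangle \text{ is solvable}\}$ is the common zero locus of finitely many polynomials, i.e. Zariski closed and defined over $F$. If $y\in\sol(S)$ then $Z$ contains every $F$-point of $S$; as $F$ is infinite these are Zariski dense, forcing $Z=S_{\bar F}$ and hence $y\in\sol(S_{\bar F})$. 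Applying the direct-sum lemma once more over $\bar F$, it is enough to prove $\sol(S_{\bar F})=0$ for $S_{\bar F}$ classical simple over $\bar F$.

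Finally, over $\bar F$ I would use the structure theory available for $p>3$. Given $0\neq y$, take its Jordan decomposition $y=y_s+y_n$. If $y_s\neq 0$, place $y_s$ in a maximal torus $H$, take a root $\alpha$ with $\alpha(y_s)=c\neq 0$, and test with $x=e_\alpha+e_{-\alpha}$: since $\ad(y)$ preserves the $\ad(y_s)$-eigenspaces and acts invertibly (eigenvalues $\pm c$ plus a nilpotent) on $E_{\pm c}$, the iterated brackets $\ad(y)^k(x)$ recover both $e_\alpha$ and $e_{-\alpha}$ separately (here $c\neq -c$ uses $p>2$), whence $h_\alpha=[e_\alpha,e_{-\alpha}]\in\langle x,y\rangle$ and $\langle x,y\rangle$ contains the $\mathfrak{sl}_2$ attached to $\alpha$, which is non-solvable. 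If $y_s=0$, then $y=y_n\neq 0$ is nilpotent; invoking the existence of an $\mathfrak{sl}_2$-triple $(y,h',f')$ through $y$ (valid in good characteristic, in particular for classical types with $p>3$) and testing with $x=f'$ again exhibits a non-solvable $\mathfrak{sl}_2\subseteq\langle x,y\rangle$. Either way $y\notin\sol(S_{\bar F})$, so $\sol(S_{\bar F})=0$, completing the chain of reductions. I expect the hard part to be exactly this last step: justifying the root-space and $\mathfrak{sl}_2$-triple arguments in characteristic $p>3$, especially the nilpotent case, where one must be sure that an $\mathfrak{sl}_2$-triple through a nonzero nilpotent element exists and that the resulting $\mathfrak{sl}_2$ is genuinely non-solvable.
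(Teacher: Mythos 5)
Your reduction steps coincide with the paper's: the easy inclusion $R(L)\subseteq\sol(L)$ via $\langle x,y\rangle=\langle x,y\rangle\cap R(L)+Fx$, the passage to the semisimple quotient, and the reduction to a single classical simple summand are all exactly what the paper does. The divergence is at the final, decisive step. The paper finishes in one line by citing Bois's $1.5$-generation theorem (\cite{bois}, Theorem B): for a classical simple Lie algebra over an infinite field of characteristic $>3$, every nonzero element $s_1$ admits a partner $s$ with $\langle s,s_1\rangle$ equal to the whole simple algebra, so $\langle s,y\rangle$ surjects onto a simple algebra and cannot be solvable. This works directly over $F$, with no base change and no case analysis. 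Your replacement --- Zariski density to pass to $\overline F$, then Jordan decomposition --- is a genuinely different route, and the first half of it is sound: the observation that $\{x:\langle x,y\rangle \text{ solvable}\}$ is the zero locus of finitely many polynomials (via $\langle x,y\rangle^{(d)}=0$) is correct and nicely exploits the infinitude of $F$, and the semisimple case ($y_s\neq 0$, spectral projections recovering $e_\alpha$ and $e_{-\alpha}$, hence a non-solvable $\mathfrak{sl}_2$) goes through for $p>3$.

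The genuine gap is the nilpotent case, and it is exactly where you suspected. You invoke the existence of an $\mathfrak{sl}_2$-triple through an arbitrary nonzero nilpotent element, asserting this is ``valid in good characteristic, in particular for classical types with $p>3$.'' Neither half of that assertion can be taken for granted here. First, ``classical simple'' in this paper (as in the modular classification it draws on) includes the exceptional Chevalley types, and the hypothesis $p>3$ does not exclude bad primes for those types (e.g.\ $p=5$ for $E_8$), so ``good characteristic'' is not even guaranteed. Second, even in good characteristic the Jacobson--Morozov theorem is a delicate matter in small characteristic: the classical sufficient conditions require $p$ large relative to the Coxeter number (or at least a nontrivial theorem of Pommerening/Premet type with its own restrictions), and existence of triples through every nilpotent element is known to fail for small primes in some types. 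Your argument therefore rests on an unproved (and in the stated generality, false) input precisely in the case that carries the theorem for nilpotent $y$. To repair it you would either need to restrict to types $A$--$D$ and build the triples by hand from Jordan blocks and form modules (which does work for $p>2$ there), or --- much more simply --- replace the whole case analysis by the appeal to Bois's theorem, which is what the paper does and which covers all classical types uniformly without leaving the ground field.
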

\begin{proof} If $y\in R(L)$, then, for every $x\in L$, $\langle x,y\rangle = \langle x,y\rangle\cap R(L) + Fx$, which is solvable. So suppose that $y\notin R(L)$. We need to show that there is an $x\in L$ such that $\langle x,y\rangle$ is not solvable. By factoring out $R(L)$, we may assume that $L$ is semisimple. Let $L=S_1\oplus \ldots \oplus S_n$, where $S_i$ is classical simple and put $y=s_1+ \ldots + s_n$, where $s_i\in S_i$. Then there is an element $s\in S_1$ such that $\langle s,s_1\rangle=S_1$, by \cite[Theorem B and section 1.2.2]{bois}, and $\langle x,s\rangle$ is not solvable.
\end{proof}

\begin{corollary}
 With the same hypothesis, $sol(L)=R(L)$.    
\end{corollary}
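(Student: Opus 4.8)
The plan is to observe that this corollary is an immediate reformulation of the preceding theorem, requiring nothing beyond unwinding a definition. First I would recall from Section~2 that the solvabilizer of $L$ is defined as
\[
\sol(L) = \{x \in L \mid \langle h, x\rangle \text{ is solvable for all } h \in L\}.
\]
The key observation is that the two-generated subalgebra $\langle h, x\rangle$ depends only on the unordered pair $\{h, x\}$, so the defining condition ``$\langle h, x\rangle$ is solvable for all $h \in L$'' is symmetric and coincides verbatim with the condition ``$\langle x, y\rangle$ is solvable for every $x \in L$'' appearing in the theorem (after the harmless renaming of the bound and free variables). Thus $\sol(L)$ is precisely the set of elements characterized in the theorem.

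With this identification in hand, the conclusion is instant: the theorem asserts that $R(L)$ equals that set, and hence $\sol(L) = R(L)$. I do not expect any genuine obstacle here, since the entire mathematical content resides in the theorem, and the corollary merely records that the set appearing in its statement is, by definition, the solvabilizer $\sol(L)$. The only points worth verifying are that the variable matching is exact and that the hypotheses — namely $F$ infinite of characteristic $p > 3$ and $L/R(L)$ classical semisimple — are inherited without change, which is exactly what the phrase ``with the same hypothesis'' stipulates.
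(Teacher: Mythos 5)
Your proposal is correct and matches the paper's (implicit) reasoning: the corollary is stated without proof precisely because the set characterized in the theorem is, by the definition in Section~2, exactly $\sol(L)$, so the identity $\sol(L)=R(L)$ follows immediately. Your observation that the defining condition is symmetric in the two generators and that the hypotheses carry over unchanged is all that is needed.
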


Investigating $\sol(L)$ when $L$ is of Caratan type remains an open problem.

\subsection*{Properties of solvabilizers}
If $x\in L$, then $\sol_L(x)$ is the union of all maximal solvable subalgebras containing $x$.
\par

We will need the following result from linear algebra.

\begin{lemma}\label{subspaces}  Let $B_1, \ldots, B_n$ be subspaces of a vector space $V$ such that $B_i\not\subseteq \cup_{j\neq i} B_j$. If $|F|>n-1$, then $\cup_{i=1}^nB_i$ is not a subspace
\end{lemma}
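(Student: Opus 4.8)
The plan is to argue by contradiction: suppose the union $W := \bigcup_{i=1}^n B_i$ \emph{is} a subspace, and extract from the hypotheses a line (an affine translate of a one-dimensional subspace) carrying $|F|$ distinct points which cannot be distributed among the available subspaces once $|F| > n-1$. First I would dispose of the degenerate case $n=1$, where the claim is false ($B_1$ is a subspace), and assume $n \geq 2$. The irredundancy hypothesis $B_i \not\subseteq \bigcup_{j\neq i} B_j$ then forces each $B_i$ to be a \emph{proper} subspace of $W$: if some $B_i$ equalled $W$ it would contain every $B_j$, hence $\bigcup_{j\neq i} B_j$, contradicting irredundancy.

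Next I would choose witnesses. By irredundancy there is a vector $u \in B_1 \setminus \bigcup_{j\neq 1} B_j$; in particular $u \neq 0$ and $u \notin B_j$ for every $j \geq 2$. Since $B_1 \subsetneq W$, there is also a vector $v \in W \setminus B_1$. Consider the line
\[
\ell = \{\, v + \lambda u : \lambda \in F \,\}.
\]
Because $u \neq 0$ these $|F|$ points are pairwise distinct, and because $W$ is assumed to be a subspace, every point of $\ell$ lies in $W$, hence in some $B_i$.

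The heart of the argument is a two-part counting estimate. First, no point of $\ell$ lies in $B_1$: if $v + \lambda u \in B_1$ then, since $u \in B_1$, we would get $v = (v + \lambda u) - \lambda u \in B_1$, contradicting the choice of $v$. Second, for each $j \geq 2$ the subspace $B_j$ contains \emph{at most one} point of $\ell$: if $v + \lambda u$ and $v + \mu u$ both lie in $B_j$ with $\lambda \neq \mu$, then $(\lambda - \mu) u \in B_j$ yields $u \in B_j$, contradicting $u \notin B_j$. Thus the $|F|$ distinct points of $\ell$ are distributed among the $n-1$ subspaces $B_2, \ldots, B_n$, at most one apiece, forcing $|F| \leq n-1$. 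This contradicts the hypothesis $|F| > n-1$ (including the infinite-field case, where $\ell$ is already infinite while the cover is finite), and so $W$ cannot be a subspace.

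I expect the only genuinely delicate point to be the bookkeeping that pins down where the points of $\ell$ may go: the reduction to the $n-1$ subspaces $B_2, \ldots, B_n$ together with the ``at most one point each'' estimate. Everything else — the existence of the witnesses $u$ and $v$, and the distinctness of the points of $\ell$ — follows directly from irredundancy and $u \neq 0$. It is worth flagging that the hypothesis $B_i \not\subseteq \bigcup_{j\neq i} B_j$ is used twice in an essential way: to produce $u$ outside all other $B_j$, and (through properness of $B_1$) to produce $v$ outside $B_1$.
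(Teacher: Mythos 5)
Your proof is correct and is essentially identical to the paper's: both pick $u\in B_1\setminus\bigcup_{j\neq 1}B_j$ and $v\notin B_1$, form the line $v+Fu$, and count at most one intersection point with each $B_j$ ($j\geq 2$) and none with $B_1$ to force $|F|\leq n-1$. Your additional remarks (the $n=1$ degeneracy, and the justification that $v$ can be chosen inside the union so the whole line lies in the cover) are careful touches the paper leaves implicit, but the argument is the same.
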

\begin{proof} Suppose that $B=\cup_{i=1}^nB_i$ is a subspace. Let $x\in B_1, x\notin \cup_{j>1}B_j,y\notin B_1$ and put $C=y+Fx$. Then $|C\cap B_1|=0$. Also, $|C\cap B_j|\leq 1$ for $j>1$. For, if $y+\alpha x, y+\beta x\in C\cap B_j$ (with $\alpha\neq \beta$), then $x\in B_j$, a contradiction.  Hence $|C|\leq n-1$.
\par

Let $\theta: F \rightarrow C: \alpha \mapsto y+\alpha x$. Then $\theta$ is injective, whence $|F|=|C|\leq n-1$.
\end{proof}

\begin{lemma}\label{solvabilizer1}
Let $L$ be a Lie algebra of dimension $n$ over a field $F$ with $|F|> n-2$, and let $x\in L$. Then the following statements are equivalent:
\begin{enumerate}[\rm(i)]
  \item $\sol_L(x)$ is a \emph{subspace} of $L$;
  \item $\sol_L(x)$ is a \emph{maximal solvable subalgebra} of $L$;
  \item $x$ lies in a unique maximal solvable subalgebra of $L$.
\end{enumerate}
\end{lemma}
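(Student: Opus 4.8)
The plan is to prove the cycle of implications (i) $\Rightarrow$ (iii) $\Rightarrow$ (ii) $\Rightarrow$ (i), with essentially all of the content concentrated in the first link. The two remaining links are immediate. For (iii) $\Rightarrow$ (ii), I would invoke the structural fact recorded above, namely that $\sol_L(x)$ is the union of all maximal solvable subalgebras of $L$ containing $x$: if there is only one such subalgebra $M$, the union collapses, so $\sol_L(x)=M$ is itself a maximal solvable subalgebra. For (ii) $\Rightarrow$ (i) there is nothing to prove, since a subalgebra is in particular a subspace.

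The heart of the lemma is (i) $\Rightarrow$ (iii), which I would establish in contrapositive form: if $x$ lies in at least two distinct maximal solvable subalgebras, then $\sol_L(x)$ fails to be a subspace. Writing $\{M_i\}_i$ for the maximal solvable subalgebras containing $x$, we have $\sol_L(x)=\bigcup_i M_i$. If two of them, $M_1\neq M_2$, are distinct, then by maximality neither contains the other; moreover no single $M_i$ can equal the whole union, for if $M_i=\sol_L(x)$ were solvable then every $M_j\subseteq M_i$ would force $M_j=M_i$ by maximality, collapsing the family to one member. Hence, assuming (i), each $M_i$ is a \emph{proper} subspace of the subspace $\sol_L(x)$. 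The strategy is then to extract from this family an irredundant subfamily $M_{i_1},\dots,M_{i_k}$ whose union is still $\sol_L(x)$ and which satisfies $M_{i_r}\not\subseteq\bigcup_{s\neq r}M_{i_s}$, and to apply Lemma~\ref{subspaces}: as soon as $|F|>k-1$ such a union cannot be a subspace, contradicting (i).

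The main obstacle, and the step demanding the most care, is matching the size $k$ of the irredundant cover against the field hypothesis $|F|>n-2$. To feed Lemma~\ref{subspaces} one needs $k\leq n-1$, so the argument must guarantee that $\sol_L(x)$ can be covered irredundantly by at most $n-1$ maximal solvable subalgebras. I would attack this by bounding the members of an irredundant cover of the at most $n$-dimensional space $\sol_L(x)$ by proper subalgebras that all contain the fixed vector $x$, passing if convenient to the quotient by $\langle x\rangle$ to lower the ambient dimension before counting. This finiteness-and-counting step is genuinely the crux: without it the family $\{M_i\}_i$ could a priori be large (or infinite), and the finite hypothesis of Lemma~\ref{subspaces} would not apply directly.

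A second point to dispatch is the degenerate case $x\in\sol(L)$ — for instance $x=0$, where $\langle x,y\rangle=\langle y\rangle$ is always solvable and hence $\sol_L(x)=L$ is trivially a subspace while being neither a maximal solvable subalgebra (when $L$ is non-solvable) nor contained in a unique one. Here (i) holds but (ii) and (iii) can fail, so I would state the equivalence for vertices of the solvable graph, i.e. under $x\notin\sol(L)$ (equivalently $\sol_L(x)\neq L$), and flag this restriction explicitly. Once the counting in the previous paragraph is controlled and this degenerate case is excluded, Lemma~\ref{subspaces} closes (i) $\Rightarrow$ (iii) and completes the cycle.
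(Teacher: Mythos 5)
Your route is the paper's route: the same union formula $\sol_L(x)=\bigcup\{M\le L : M \text{ maximal solvable},\ x\in M\}$ drives (iii) $\Rightarrow$ (ii) and (ii) $\Rightarrow$ (i), and (i) $\Rightarrow$ (iii) is argued by contradiction by feeding an irredundant family of maximal solvable subalgebras through $x$ into Lemma~\ref{subspaces}. The one place you stop short is exactly the step you yourself single out as the crux: you never actually prove that the irredundant family $M_1,\dots,M_k$ satisfies $k\le n-1$, which is what is needed for Lemma~\ref{subspaces} to give $|F|\le k-1\le n-2$ against the hypothesis $|F|>n-2$. You describe a plan (``pass to the quotient by $\langle x\rangle$ and count'') but do not execute it, so as written the argument is incomplete. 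The paper closes this in one line: since $k>1$, no $M_i$ can equal $Fx$ (it would then be properly contained in some other solvable $M_j$, contradicting maximality), so $\dim M_i\ge 2$ for every $i$, and it then asserts $n\ge\dim\bigl(\bigcup_{i=1}^{k}M_i\bigr)\ge k+1$, whence $k\le n-1$. Be warned that this dimension count is itself delicate: an irredundant family of $k$ planes through a common line need not span a space of dimension $k+1$ (three distinct planes through a line in a three-dimensional space form an irredundant family yet span only dimension $3$), so your quotient-by-$\langle x\rangle$ idea, if carried out, would be doing genuine work rather than rederiving a triviality. A related point neither you nor the paper settles is how to extract a \emph{finite} irredundant subfamily whose union is still the subspace $\sol_L(x)$ when the field, and hence possibly the family of maximal solvable subalgebras through $x$, is infinite.

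Separately, your observation about the degenerate case is correct and is \emph{not} addressed in the paper: for $x=0$, or more generally for $x\in\sol(L)$ with $\sol_L(x)=L$ (e.g.\ a central element of $\mathfrak{gl}_2(F)$), condition (i) holds while (ii) and (iii) fail whenever $L$ is non-solvable, so the statement really does require $x\notin\sol(L)$ or some equivalent restriction. Flagging that is an improvement on the printed proof, not a defect of yours.
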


\begin{proof}
First, we claim that
\begin{equation}\label{sol1}
\sol_L(x) = \bigcup\big\{M\le L \mid M \text{ maximal solvable and } x\in M \big\}.
\end{equation}
In fact, let $y\in \sol_L(x)$. Then $\langle x,y\rangle$ is solvable, hence it is contained in some
maximal solvable subalgebra $M$ with $x,y\in M$; thus $y$ lies in the right side of \eqref{sol1}.

Conversely, if $y$ belongs to a maximal solvable subalgebra $M$ containing $x$, the subalgebra $\langle x, y\rangle\subseteq M$ is solvable, so $y\in\sol_L(x)$. This proves \eqref{sol1}
\medskip

\noindent\rm{(ii) $\Rightarrow$ (i).} Any maximal solvable subalgebra is of course a subspace.
\medskip

\noindent\rm{(iii) $\Rightarrow$ (ii).} If there is a unique maximal solvable subalgebra $M$
containing $x$, then by \eqref{sol1} we have $\sol_L(x)=M$, which is maximal solvable by hypothesis.
\medskip

\noindent\rm{(ii) $\Rightarrow$ (iii).} Suppose $M:=\sol_L(x)$ is a maximal solvable subalgebra of $L$. Let now $N$ be another maximal solvable subalgebra $N\neq M$ contained $x$, then $N \subseteq \sol_L(x) = M$ by \eqref{sol1}; maximality forces
$N=M$, a contradiction.  Hence $x$ lies in exactly one maximal
solvable subalgebra.
\medskip

\noindent\rm{(i) $\Rightarrow$ (iii).}  Assume $\sol_L(x)$ is a subspace of $L$.  Suppose further that $x$ lies in $k>1$ maximal solvable subalgebras $M_1, \ldots, M_k$ with $M_i \not\subseteq \cup_{j\neq i} M_j$. Since $\dim M_i\geq 2$, $n\geq \dim \cup_{i=1}^kM_i\geq k+1$. By Lemma \ref{solvabilizer1}, $|F|\leq k-1\leq n-2$, a contradiction. Therefore, there is exactly one maximal solvable subalgebra containing $x$.
\end{proof}

\begin{lemma}\label{sol-properties}
Let $L$ be a Lie algebra, $J$ an ideal of $L$ and $x,y\in L$. Then
\begin{enumerate}[(i)]
\item  $\sol(L)\subseteq \sol_L(x)$;
\item  $\frac{\sol_L(x)+J}{J}\subseteq \sol_L(x+J)$;
\item  $\sol_{L/J}(x+J)=\frac{\sol_L(x)}{J}$ whenever $J\subseteq R(L)$;
\item  the solvabilizer of every element of $L$ is a subalgebra if and only if the same is true for $L/J$ for some ideal $J$ of $L$ with $J\subseteq R(L)$;
\item  if $\sol_L(x)$ is a subalgebra of $L$ and $y\in \sol_L(x)$, then $\sol_L(y)=\sol_L(x)$;
\item  if $\theta$ is an automorphism of $L$, then $\theta(\sol_L(x))=\sol_L(\theta(x))$.
\end{enumerate}
\end{lemma}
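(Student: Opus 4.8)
The plan is to order the six items so that the later ones reuse the earlier ones, and to isolate the one genuinely delicate point, which is (v).

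Items (i), (ii) and (vi) are immediate. For (i), if $y\in\sol(L)$ then $\langle h,y\rangle$ is solvable for every $h$, in particular for $h=x$, so $y\in\sol_L(x)$. For (vi), an automorphism $\theta$ restricts to an isomorphism $\langle x,y\rangle\to\langle\theta(x),\theta(y)\rangle$, and solvability is an isomorphism invariant; hence $y\in\sol_L(x)$ iff $\theta(y)\in\sol_L(\theta(x))$, which is exactly $\theta(\sol_L(x))=\sol_L(\theta(x))$. For (ii), the projection $\pi\colon L\to L/J$ maps $\langle x,y\rangle$ onto $\langle x+J,y+J\rangle$, and a homomorphic image of a solvable algebra is solvable; thus $y\in\sol_L(x)$ forces $x+J,y+J$ to generate a solvable subalgebra, giving $(\sol_L(x)+J)/J\subseteq\sol_{L/J}(x+J)$.

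The first substantive step is (iii), from which (iv) follows formally. I would first record that $R(L)\subseteq\sol_L(x)$ for every $x$: for $z\in R(L)$ one has $\langle x,z\rangle=(\langle x,z\rangle\cap R(L))+Fx$, an extension of the solvable ideal $\langle x,z\rangle\cap R(L)$ by a one-dimensional (hence abelian) quotient, so it is solvable; this is the computation already used above to relate $R(L)$ and $\sol(L)$. Since $J\subseteq R(L)$ we get $J\subseteq\sol_L(x)$, so $\sol_L(x)/J$ is meaningful and (ii) reads $\sol_L(x)/J\subseteq\sol_{L/J}(x+J)$. For the reverse inclusion take $y+J\in\sol_{L/J}(x+J)$; then $(\langle x,y\rangle+J)/J\cong\langle x,y\rangle/(\langle x,y\rangle\cap J)$ is solvable while $\langle x,y\rangle\cap J\subseteq J\subseteq R(L)$ is solvable, so $\langle x,y\rangle$ is solvable as an extension of solvable by solvable, whence $y\in\sol_L(x)$. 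This proves (iii). Part (iv) is then purely formal: by (iii), $\sol_{L/J}(x+J)=\pi(\sol_L(x))$, and since $\pi$ and its inverse carry subalgebras to subalgebras with $\sol_L(x)\supseteq J=\ker\pi$, the set $\sol_L(x)$ is a subalgebra iff $\pi(\sol_L(x))$ is; this equivalence (witnessed in one direction already by $J=0$) yields both implications of (iv).

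The delicate item is (v), and I expect the reverse inclusion to be the main obstacle. The natural plan is to invoke Lemma~\ref{solvabilizer1}: since $\sol_L(x)$ is a subalgebra (hence a subspace), under the standing hypothesis $|F|>n-2$ it is a \emph{maximal} solvable subalgebra $M$, and $x$ lies in no other maximal solvable subalgebra. The inclusion $\sol_L(x)\subseteq\sol_L(y)$ is then easy: for $y\in M$ and any $m\in M$ we have $\langle y,m\rangle\subseteq M$ solvable, so $M\subseteq\sol_L(y)$. The reverse inclusion $\sol_L(y)\subseteq\sol_L(x)$ amounts to showing that $M$ is also the \emph{unique} maximal solvable subalgebra containing $y$, and this does not transfer automatically from $x$ to an arbitrary $y\in M$. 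Indeed, in $\mathfrak{sl}_2$ over a field of characteristic zero one may take $x=e$, so that $\sol_L(e)=\langle h,e\rangle$ is a maximal solvable subalgebra and $h\in\sol_L(e)$, yet $\sol_L(h)=\langle h,e\rangle\cup\langle h,f\rangle$ strictly contains it; here the semisimple element $h$ lies in two distinct Borel subalgebras while the nilpotent $e$ lies in only one. This shows that the one-sided hypothesis that $\sol_L(x)$ alone be a subalgebra is not enough to force equality, and that a correct argument must supply an additional constraint on $y$ (for instance that $\sol_L(y)$ also be a subalgebra, so that Lemma~\ref{solvabilizer1} applies symmetrically to $y$ and pins $y$ to a unique maximal solvable subalgebra). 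Locating and justifying that extra hypothesis is where I expect the real work to lie.
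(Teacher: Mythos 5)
The paper offers no actual proof of this lemma: its ``proof'' is the single sentence that all six statements follow immediately from the definitions. Your arguments for (i), (ii), (iii), (iv) and (vi) are correct and supply what the paper omits. In particular, your route to (iii) --- first establishing $J\subseteq R(L)\subseteq \sol_L(x)$ via the computation $\langle x,z\rangle=(\langle x,z\rangle\cap R(L))+Fx$ (which is exactly the argument of Lemma~\ref{rad}), then getting one inclusion from (ii) and the other from the fact that a solvable-by-solvable extension is solvable --- is the right one, and your deduction of (iv) from (iii) is sound because the reverse inclusion in (iii) shows that $\sol_L(x)$ is the full $\pi$-preimage of $\sol_{L/J}(x+J)$.

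The substantive point is your analysis of (v), and you are right: as stated, (v) is \emph{false}, and your counterexample is valid. In $\mathfrak{sl}_2(F)$ with $\ch(F)=0$, a direct computation (any $z=ae+bf+ch$ with $b\neq0$ generates all of $L$ together with $e$) gives $\sol_L(e)=Fe+Fh$, a solvable subalgebra containing $h$, whereas $\sol_L(h)=(Fh+Fe)\cup(Fh+Ff)$ as in Remark~\ref{sl2(3)-basis}(3); so $\sol_L(e)$ is a subalgebra, $h\in\sol_L(e)$, yet $\sol_L(h)\neq\sol_L(e)$. The paper's own example following Proposition~\ref{Lie-2.22} (the algebra of Example~\ref{char2} over $\F_2$) gives a second counterexample: there $\sol_L(a)=L$ is a subalgebra, $b\in\sol_L(a)$, but $\sol_L(b)=Fa+Fb$. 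Your diagnosis of the repair is also correct: one must additionally assume that $\sol_L(y)$ is a subspace (e.g.\ that $L$ is an $S$-Lie algebra) \emph{and} impose the field-size hypothesis of Lemma~\ref{solvabilizer1}, so that each solvabilizer which is a subspace is forced to be the unique maximal solvable subalgebra through its element; the $\F_2$ example shows the field condition cannot be dropped even for $S$-Lie algebras, since there $\sol_L(a)=L$ is a subalgebra that is not solvable, so even your ``easy'' inclusion $\sol_L(x)\subseteq\sol_L(y)$ fails without it. In short, your proposal is not merely a proof of the lemma but a correction of it, and the corrected statement should be recorded with the extra hypotheses you identify.
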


\begin{proof} 
All statements follow immediately from the definition of $\sol_A(B)$ and elementary set-theoretic manipulations.
\end{proof}

This lemma collects a number of functorial properties of solvabilizers, showing how they behave under quotients, ideals, and automorphisms. In particular, (iii) and (iv) explain how passing to quotients with solvable radical preserves the subalgebra structure of solvabilizers, which will be useful later when analysing $S$-Lie algebras.

\begin{definition}\label{Sol-subset}
Let $L$ be a finite-dimensional Lie algebra over a field $F$,
and let $A, B\subseteq L$ be non-empty subsets.
The \emph{solvabilizer of $B$ with respect to $A$} is defined as follows: 
\[\sol_{A}(B)\;:=\;\bigl\{\,a\in A \mid \langle a,b\rangle
        \text{ is a solvable Lie subalgebra of }L
        \text{ for all }b\in B\bigr\}.\]
We put  
$\sol_{\emptyset}(B) := \emptyset$ and $\sol_{A}(\emptyset):=A$.
Note that $\sol_{A}(x) =\sol_{A}(\{x\})$ for $x\in L$, and $\sol(L) = \sol_{L}(L)$.
\end{definition}

\begin{lemma}\label{Lie-2.9}
Let $L$ be a finite–dimensional Lie algebra over a field $F$.
For non-empty subsets $A, B, C\subseteq L$ hold:
\begin{enumerate}
\item If $A\subseteq B$, then $\sol_{A}(C)\subseteq\sol_{B}(C)$  
      and $\sol_{C}(B)\subseteq\sol_{C}(A)$.

\item $\sol_{A}\!\bigl(\sol_{B}(A)\bigr)=A$.

\item If $A\subseteq B$, then
      $\sol_{A}(C)=A\cap\sol_{B}(C)$.

\item $\sol_{C}(A\cup B)=\sol_{C}(A)\cap\sol_{C}(B)$ and  
      $\sol_{C}(A\cap B)\supseteq\sol_{C}(A)\cup\sol_{C}(B)$.

\item $\displaystyle
      \sol_{A}(B)=\bigcap_{x\in B}\sol_{A}(x).$
      In particular,
      $\displaystyle
      \sol(L)=\bigcap_{x\in L}\sol_L(x).$
\end{enumerate}
\end{lemma}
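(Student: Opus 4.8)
The plan is to verify each of the five claims directly from Definition~\ref{Sol-subset}, since each is a set-theoretic identity whose proof reduces to unwinding the condition ``$\langle a,b\rangle$ is solvable''. I would treat the five parts in order, as the later parts reuse the monotonicity established in~(1). Throughout, the key observation is that membership $a\in\sol_A(B)$ is a \emph{universally quantified} condition over $B$ (namely $\langle a,b\rangle$ solvable for \emph{all} $b\in B$), so enlarging the second argument shrinks the solvabilizer while enlarging the first argument enlarges it. Each identity is then a formal consequence of how universal quantifiers interact with $\subseteq$, $\cup$, and $\cap$.

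For~(1), the inclusion $\sol_A(C)\subseteq\sol_B(C)$ is immediate: if $a\in A\subseteq B$ satisfies the solvability condition against all of $C$, then $a$ is an element of $B$ satisfying the same condition, hence lies in $\sol_B(C)$. The reverse-variance statement $\sol_C(B)\subseteq\sol_C(A)$ follows because requiring $\langle c,b\rangle$ solvable for all $b\in B$ is a stronger demand than requiring it only for all $b\in A\subseteq B$. For~(3), I would prove the two inclusions of $\sol_A(C)=A\cap\sol_B(C)$: the forward inclusion is~(1) together with $\sol_A(C)\subseteq A$, and the reverse holds because an element of $A\cap\sol_B(C)$ lies in $A$ and satisfies the solvability condition against $C$, which is exactly membership in $\sol_A(C)$. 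Part~(4) is the standard distribution of a universal quantifier over a union: $\langle c,x\rangle$ is solvable for all $x\in A\cup B$ iff it is solvable for all $x\in A$ \emph{and} for all $x\in B$, giving the first equality; the inclusion $\sol_C(A\cap B)\supseteq\sol_C(A)\cup\sol_C(B)$ then follows by applying~(1) to $A\cap B\subseteq A$ and $A\cap B\subseteq B$. Part~(5) is the same quantifier-unwinding: $a\in\sol_A(B)$ iff $\langle a,x\rangle$ is solvable for every $x\in B$, which is precisely membership in every $\sol_A(x)$, and the special case $A=B=L$ recovers Remark~\ref{sl2(3)-basis}(2).

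The one part that requires genuine care rather than pure formalism is~(2), the claim $\sol_A\bigl(\sol_B(A)\bigr)=A$. The inclusion $\sol_A(\sol_B(A))\subseteq A$ is trivial from the definition, since any solvabilizer-with-respect-to-$A$ is a subset of $A$. For the reverse inclusion I would fix $a\in A$ and show $a\in\sol_A(\sol_B(A))$, i.e.\ that $\langle a,b\rangle$ is solvable for every $b\in\sol_B(A)$. By the definition of $\sol_B(A)$, such a $b$ satisfies $\langle b,a'\rangle$ solvable for all $a'\in A$; taking $a'=a$ gives exactly that $\langle a,b\rangle=\langle b,a\rangle$ is solvable. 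This is where I expect the main (though still mild) subtlety to lie, since one must use the symmetry of the generated-subalgebra condition—that $\langle a,b\rangle=\langle b,a\rangle$—to convert the defining property of $\sol_B(A)$ into the defining property of $\sol_A(\sol_B(A))$. I would flag only that these arguments implicitly assume the relevant sets are non-empty, so that the boundary conventions $\sol_\emptyset(B)=\emptyset$ and $\sol_A(\emptyset)=A$ do not interfere; under the standing non-emptiness hypothesis of the lemma this causes no difficulty.
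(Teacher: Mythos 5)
Your proposal is correct and matches the paper, whose own proof simply declares these identities straightforward consequences of Definition~\ref{Sol-subset}; your direct quantifier-unwinding, including the symmetry argument $\langle a,b\rangle=\langle b,a\rangle$ for part (2), is exactly the intended verification. The only minor point worth noting is that in part (4) the set $A\cap B$ may be empty even under the lemma's hypotheses, but the convention $\sol_C(\emptyset)=C$ makes the inclusion hold trivially in that case.
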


\begin{proof} 
These are straightforward. 
\end{proof}

\begin{lemma}\label{Lie-2.6}
Let $L$ be a finite-dimensional Lie algebra over a field $F$ of characteristic zero,
let $H\le L$ be a solvable subalgebra, and let $s\in\sol(L)$.
Then the Lie subalgebra $\langle H, s\rangle$ is itself solvable.
\end{lemma}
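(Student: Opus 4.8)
The plan is to leverage the characteristic-zero hypothesis to convert the pointwise condition defining $\sol(L)$ into a structural one. By Theorem \ref{Thompson}, since $\ch F = 0$, we have $\sol(L) = R(L)$, the maximal solvable ideal of $L$. Thus the element $s$ actually lies in a solvable \emph{ideal}, and this is precisely the information that makes the generated subalgebra $\langle H, s\rangle$ tractable: the defining property of $\sol(L)$ is a priori only a two-generated condition, but Thompson's theorem upgrades it to membership in an ideal.

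First I would form the subspace $H + R(L)$. Since $R(L)$ is an ideal of $L$ and $H$ is a subalgebra, the sum $H + R(L)$ is again a subalgebra of $L$ (the sum of a subalgebra and an ideal is always a subalgebra, as $[H+R(L),H+R(L)]\subseteq H + R(L)$). Because $s \in R(L) \subseteq H + R(L)$ and $H \subseteq H + R(L)$, minimality of the generated subalgebra gives $\langle H, s\rangle \subseteq H + R(L)$. Hence it suffices to prove that $H + R(L)$ is solvable, since any subalgebra of a solvable algebra is solvable.

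Next I would establish solvability of $H + R(L)$ by an extension argument. The ideal $R(L)$ of $L$ is in particular an ideal of the subalgebra $H + R(L)$, and it is solvable by definition. By the second isomorphism theorem,
\[
(H + R(L))/R(L) \;\cong\; H/\bigl(H \cap R(L)\bigr),
\]
which is a quotient of the solvable algebra $H$, hence solvable. An extension of a solvable algebra by a solvable algebra is solvable, so $H + R(L)$ is solvable, and therefore so is $\langle H, s\rangle$.

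The main obstacle---indeed essentially the only conceptual step---is recognizing that the characteristic-zero hypothesis is indispensable and that it enters precisely through Theorem \ref{Thompson}. Without the identification $\sol(L) = R(L)$, the element $s$ would satisfy only the weaker property that $\langle x, s\rangle$ is solvable for each \emph{individual} $x$, and there is no reason such a pointwise condition should globalize to force $\langle H, s\rangle$ solvable; Example \ref{char2} shows exactly this failure in positive characteristic, where $\sol(L)$ need not even be an ideal. Once $s$ is known to lie in the solvable ideal $R(L)$, the remaining steps are routine facts about solvable Lie algebras.
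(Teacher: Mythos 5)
Your proposal is correct and follows essentially the same route as the paper's proof: both invoke Theorem \ref{Thompson} to place $s$ in $R(L)$, form the subalgebra $K = H + R(L)$, show $K$ is solvable via the extension $K/R(L)\cong H/(H\cap R(L))$, and conclude by observing $\langle H,s\rangle\subseteq K$. The only cosmetic difference is that you justify the containment by noting $H+R(L)$ is a subalgebra containing the generators, while the paper traces iterated brackets into $K$; these are equivalent.
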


\begin{proof}
Set $K:=H+R(L)\subseteq L$. Since $R(L)$ is a solvable ideal of $L$ and $K/R(L)\cong H/(H\cap R(L))$ is a homomorphic image of the solvable algebra $H$, it follows that $K$ is solvable. 

Next, using Theorem \ref{Thompson}, we have $s\in \sol(L)=R(L)$, and as $R(L)$ is an ideal we have
$[s,h]\in [R(L),H]\subseteq R(L)\subseteq K$ for every $h\in H$. Therefore every
iterated Lie bracket formed from elements of $H\cup\{s\}$ lies in $K$, so
$\langle H,s\rangle\subseteq K$. Since $K$ is solvable, it follows that $\langle H,s\rangle$ is
solvable.
\end{proof}

\begin{lemma}\label{Lie-2.7}
Let $L$ be a finite-dimensional Lie algebra over a field $F$. Then, for every $x\in L$,
\[\sol(L) + \sol_L(x) = \sol_L(x).\]
\end{lemma}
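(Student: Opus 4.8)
The plan is to prove the two inclusions separately; the reverse one is immediate and the forward one rests on Lemma~\ref{Lie-2.6}. First I would observe that $0\in\sol(L)$, since for every $h\in L$ the subalgebra $\langle h,0\rangle=Fh$ is at most one-dimensional, hence abelian and solvable. Consequently every $y\in\sol_L(x)$ can be written $y=0+y\in\sol(L)+\sol_L(x)$, which gives the inclusion $\sol_L(x)\subseteq\sol(L)+\sol_L(x)$. (This is also consistent with Lemma~\ref{sol-properties}(i), which records $\sol(L)\subseteq\sol_L(x)$.)

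For the substantive inclusion $\sol(L)+\sol_L(x)\subseteq\sol_L(x)$, I would take an arbitrary element $z=s+y$ with $s\in\sol(L)$ and $y\in\sol_L(x)$, and show $z\in\sol_L(x)$, i.e. that $\langle x,z\rangle$ is solvable. The key move is to set $H:=\langle x,y\rangle$, which is solvable precisely because $y\in\sol_L(x)$, and then to invoke Lemma~\ref{Lie-2.6} with this $H$ and this $s\in\sol(L)$ to conclude that $\langle H,s\rangle$ is solvable. Finally I would note that $x\in H\subseteq\langle H,s\rangle$ and that $s+y\in\langle H,s\rangle$ (since $y\in H$ and $s\in\langle H,s\rangle$, and $\langle H,s\rangle$ is a subspace), so $\langle x,s+y\rangle\subseteq\langle H,s\rangle$; being a subalgebra of a solvable algebra, $\langle x,s+y\rangle$ is itself solvable, whence $z=s+y\in\sol_L(x)$. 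Combining the two inclusions yields the claimed equality.

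The main obstacle is not the set-theoretic bookkeeping but the appeal to Lemma~\ref{Lie-2.6}, which is stated only in characteristic zero, its proof relying on Theorem~\ref{Thompson} and the identity $\sol(L)=R(L)$. In characteristic zero the argument above is complete. Over a field of positive characteristic the equality $\sol(L)=R(L)$ can fail, as Example~\ref{char2} shows, so one cannot simply absorb $s$ into the solvable radical, and the delicate point becomes whether $\langle H,s\rangle$ stays solvable for every solvable $H$ and every $s\in\sol(L)$. I would therefore either restrict the statement to characteristic zero or, to retain the stated generality, establish a characteristic-free replacement for Lemma~\ref{Lie-2.6}: observing that $s\in\sol\bigl(\langle H,s\rangle\bigr)$ (because $\langle k,s\rangle$ is solvable for all $k\in L\supseteq\langle H,s\rangle$), one would need to deduce the solvability of $\langle H,s\rangle$ from this together with the solvability of $H$. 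I expect proving this last implication without the hypothesis $\sol(L)=R(L)$ to be the genuine crux of extending the result beyond characteristic zero.
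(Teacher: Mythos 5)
Your argument is correct in characteristic zero, but it takes a genuinely different --- and more substantive --- route than the paper, whose entire proof is the one line ``it follows immediately from Lemma~\ref{sol-properties}(i)'', i.e.\ from the inclusion $\sol(L)\subseteq\sol_L(x)$. That inclusion alone only yields your easy direction $\sol_L(x)\subseteq\sol(L)+\sol_L(x)$; it does not formally give $\sol(L)+\sol_L(x)\subseteq\sol_L(x)$, since $\sol_L(x)$ is in general not closed under addition (Remark~\ref{sl2(3)-basis}(3)). So the substantive inclusion does need an argument of the kind you supply: with $H=\langle x,y\rangle$ solvable and $s\in\sol(L)$, Lemma~\ref{Lie-2.6} makes $\langle H,s\rangle$ solvable, and since $x$ and $s+y$ both lie in the subspace $\langle H,s\rangle$, the subalgebra $\langle x,s+y\rangle$ is solvable, giving $s+y\in\sol_L(x)$. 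In this sense your write-up is a genuine improvement on the paper's justification.

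Your closing caveat is also well taken and points at a real issue. Lemma~\ref{Lie-2.6} is stated and proved only in characteristic zero, because it rests on Theorem~\ref{Thompson} ($\sol(L)=R(L)$), whereas Lemma~\ref{Lie-2.7} is asserted over an arbitrary field; Example~\ref{char2} shows that $\sol(L)$ can strictly contain $R(L)$ in positive characteristic, so one cannot simply absorb $s$ into a solvable ideal. For $s\in R(L)$ the argument does go through over any field (as in Lemma~\ref{rad}, $\langle x,s+y\rangle\subseteq\langle x,y\rangle+R(L)$, which is solvable), but for $s\in\sol(L)\setminus R(L)$ neither your argument nor the paper's citation settles the matter --- in Example~\ref{char2} one can verify the identity by direct computation, but that is not a proof. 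Either the lemma should carry a characteristic-zero (or $\sol(L)=R(L)$) hypothesis, or the characteristic-free substitute for Lemma~\ref{Lie-2.6} that you describe needs to be established; identifying that as the crux is exactly right.
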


\begin{proof}
It follows immediately from Lemma \ref{sol-properties} (i).


\end{proof}

\begin{definition}\label{S-Lie}
Let $L$ be a finite–dimensional Lie algebra over a field $F$. We say that $L$ is an S–Lie algebra if $\sol_L(x)$ is a Lie subalgebra of $L$ for every $x\in L$.
\end{definition}

\begin{example}
Every solvable finite–dimensional Lie algebra over a field $F$ is an S-Lie algebra.
\end{example}





\begin{lemma}\label{Lie-2.10}
Let $L$ be a finite–dimensional S-Lie algebra. Then, for any Lie subalgebra $H\leq L$ and any subset $A\subseteq L$ (possibly empty) $\sol_{H}(A)$ is a Lie subalgebra of $L$ (and in fact of $H$).
\end{lemma}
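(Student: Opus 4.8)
The plan is to reduce the general statement to the case of a single-element solvabilizer, where the $S$-Lie hypothesis applies directly, and then recover the general case by an intersection argument. First I would dispose of the degenerate case $A=\emptyset$: by the conventions in Definition~\ref{Sol-subset} we have $\sol_{H}(\emptyset)=H$, which is a subalgebra of $L$ (and of $H$) by hypothesis, so there is nothing to prove. Hence from now on I would assume $A\neq\emptyset$.

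Next I would invoke Lemma~\ref{Lie-2.9}(5) with first argument $H$ and second argument $A$, giving
\[
\sol_{H}(A)=\bigcap_{a\in A}\sol_{H}(a).
\]
This rewrites the solvabilizer of a whole subset as an intersection of solvabilizers of single elements, which is the crucial simplification, since the $S$-Lie condition is stated only for single elements. The remaining task is therefore to show that each $\sol_{H}(a)$ is a subalgebra and that the intersection preserves this property.

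For a fixed $a\in A$, since $H\subseteq L$, Lemma~\ref{Lie-2.9}(3) (with $A\to H$, $B\to L$, $C\to\{a\}$) yields $\sol_{H}(a)=H\cap\sol_{L}(a)$. Because $L$ is an $S$-Lie algebra, $\sol_{L}(a)$ is a subalgebra of $L$ by Definition~\ref{S-Lie}, and $H$ is a subalgebra by assumption; hence $\sol_{H}(a)=H\cap\sol_{L}(a)$ is an intersection of two subalgebras of $L$, and is therefore itself a subalgebra of $L$ contained in $H$. (I would note in passing that each such set is non-empty, as it contains $0$, since $\langle 0,a\rangle=\langle a\rangle$ is one-dimensional hence solvable; this reassures us that the intersection is a genuine subalgebra rather than a vacuous object.) Taking the intersection over all $a\in A$, the set $\sol_{H}(A)=\bigcap_{a\in A}(H\cap\sol_{L}(a))$ is an intersection of subalgebras of $L$, all lying inside $H$, and an arbitrary intersection of subalgebras is again a subalgebra. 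This gives that $\sol_{H}(A)$ is a subalgebra of $L$, and by construction it is contained in $H$, so it is a subalgebra of $H$ as well.

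I do not expect a serious obstacle here: the argument is essentially formal once the two identities from Lemma~\ref{Lie-2.9} are in hand. The only points requiring care are bookkeeping ones, namely correctly applying parts (3) and (5) of Lemma~\ref{Lie-2.9} with the right assignment of the sets $A,B,C$, and separately handling the empty-set convention so that the statement is literally true for all subsets $A$ (including $A=\emptyset$). The substance of the lemma is carried entirely by the $S$-Lie hypothesis, which converts the individual solvabilizers into subalgebras; everything else is stability of the subalgebra property under intersection.
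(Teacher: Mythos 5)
Your proof is correct and follows essentially the same route as the paper's: both reduce $\sol_{H}(A)$ to the intersection $H\cap\bigcap_{a\in A}\sol_{L}(a)$, invoke the $S$-Lie hypothesis to make each $\sol_{L}(a)$ a subalgebra, and conclude by stability of the subalgebra property under intersection, with the empty-set convention handled separately. Your version is merely more explicit about which parts of Lemma~\ref{Lie-2.9} are being used.
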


\begin{proof}
By definition,
\[\sol_H(A) = H\cap \sol_L(A) = H\cap \bigcap_{a\in A} \sol_L(a).\]
Hence $\sol_{H}(A)$ is an intersection of Lie subalgebras of $L$ and then a Lie subalgebra of $L$. For the special case $A=\varnothing$, our convention $\sol_{H}(\varnothing)=H$ already gives a subalgebra.
\end{proof}




\begin{lemma}\label{rad} 
Let $L$ be a Lie algebra over any field $F$ with radical $R$. Then $R\subseteq \sol(L)$ and $\sol(L/R) = \sol(L)/R$.
\end{lemma}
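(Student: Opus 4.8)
The plan is to prove the two assertions in turn, leaning on the solvabilizer machinery already in place. For the inclusion $R\subseteq\sol(L)$, I would fix $y\in R$ and an arbitrary $h\in L$, and show directly that $\langle h,y\rangle$ is solvable by trapping it inside a manifestly solvable subalgebra. Since $R$ is an ideal, the subspace $K:=Fh+R$ is closed under the bracket, because $[Fh,R]\subseteq R$, $[R,R]\subseteq R$ and $[Fh,Fh]=0$; hence $K$ is a subalgebra. Moreover $[K,K]\subseteq R$, and $R$ is solvable, so $K$ is solvable (one may view $R$ as a solvable ideal of $K$ with $K/R$ one-dimensional abelian). As $h\in K$ and $y\in R\subseteq K$, we get $\langle h,y\rangle\subseteq K$, and a subalgebra of a solvable algebra is solvable. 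Since $h$ was arbitrary, $y\in\sol(L)$, giving $R\subseteq\sol(L)$; note this argument uses no assumption on the field, matching the hypothesis.

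For the quotient identity, I would first record the consequences of Part~(i): $R\subseteq\sol(L)$ makes $\sol(L)/R$ well defined, and combining this with Lemma~\ref{sol-properties}(i) gives $R\subseteq\sol_L(x)$ for every $x\in L$. By Lemma~\ref{Lie-2.7}, $\sol(L)+\sol_L(x)=\sol_L(x)$, so $\sol_L(x)$ is a union of cosets of $\sol(L)$, and a fortiori of $R$; that is, $\sol_L(x)$ is \emph{saturated} under $R$, meaning $\sol_L(x)+R=\sol_L(x)$. Applying Lemma~\ref{sol-properties}(iii) with $J=R=R(L)$ yields $\sol_{L/R}(x+R)=\sol_L(x)/R$ for each $x$, and then, using the characterization $\sol(M)=\bigcap_{m\in M}\sol_M(m)$ of Lemma~\ref{Lie-2.9}(5) for $M=L/R$ together with the surjectivity of $x\mapsto x+R$,
\[
\sol(L/R)=\bigcap_{x\in L}\sol_{L/R}(x+R)=\bigcap_{x\in L}\frac{\sol_L(x)}{R}.
\]

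It remains to interchange the intersection with the passage to the quotient, that is, to identify $\bigcap_{x}\bigl(\sol_L(x)/R\bigr)$ with $\bigl(\bigcap_{x}\sol_L(x)\bigr)/R=\sol(L)/R$. I expect this interchange to be the only genuinely delicate point, since in general $\bigcap_i(S_i/R)$ need not equal $(\bigcap_i S_i)/R$ for arbitrary subsets $S_i$. The step is licensed precisely by the saturation established above: because each $\sol_L(x)$ is a union of $R$-cosets, one has $x+R\in\sol_L(x)/R$ if and only if $x\in\sol_L(x)$, so membership in each quotient set reduces to membership in the original set, and the intersection commutes with the quotient. Chaining this with the displayed equalities gives $\sol(L/R)=\sol(L)/R$; everything outside the interchange is routine bookkeeping with the earlier lemmas.
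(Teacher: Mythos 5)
Your proof is correct. For the inclusion $R\subseteq\sol(L)$ you argue exactly as the paper does: both proofs trap $\langle h,y\rangle$ inside a solvable subalgebra built from $R$ and a one-dimensional complement --- the paper uses $\langle h,y\rangle\cap R+Fh$ where you use the slightly larger $Fh+R$, an immaterial difference. For the quotient identity the paper argues directly element by element: $x+R\in\sol(L/R)$ iff $\langle x,y\rangle+R$ is solvable for all $y\in L$, iff $\langle x,y\rangle$ is solvable for all $y$ (because $R$ is a solvable ideal), iff $x\in\sol(L)$; phrased this way there is no intersection to interchange with the quotient. You instead route the argument through Lemma~\ref{sol-properties}(iii) and Lemma~\ref{Lie-2.9}(5), which obliges you to justify $\bigcap_{x}\bigl(\sol_L(x)/R\bigr)=\bigl(\bigcap_{x}\sol_L(x)\bigr)/R$; your saturation argument ($\sol_L(x)+R=\sol_L(x)$, obtained from part~(i) and Lemma~\ref{Lie-2.7}) handles this correctly, and you are right that this is the one point where a naive argument could go wrong. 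Both routes ultimately rest on the same fact --- a subalgebra of $L$ is solvable iff its image modulo the solvable ideal $R$ is solvable --- so the difference is one of packaging: the paper's version is shorter, yours is more explicit about the set-theoretic bookkeeping. Two minor remarks: the phrase ``union of cosets of $\sol(L)$'' is not quite meaningful in general since $\sol(L)$ need not be an additive subgroup, though you only use the consequence for the subspace $R$, which is fine; and the saturation $\sol_L(x)+R=\sol_L(x)$ can be obtained directly (for $a\in\sol_L(x)$ and $r\in R$ one has $\langle a+r,x\rangle\subseteq\langle a,x\rangle+R$, which is solvable) without leaning on Lemma~\ref{Lie-2.7}.
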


\begin{proof} 
Let $x\in R$ and $y\in L$. Then $\langle x,y\rangle = \langle x,y\rangle\cap R+Fy$, so $\langle x,y\rangle$ has a solvable ideal of codimension at most one and thus is solvable. Hence $x\in \sol(L)$.
\par

Let $x+R\in \sol((L/R)$, $y\in L$. Then $\langle x,y\rangle+R=\langle x+R,y+R\rangle$ is solvable. But then $\langle x,y\rangle$ is solvable, so $x\in \sol(L)$. The reverse inclusion is clear.
\end{proof}

The next theorem shows that, outside characteristic $2$, the only ideals that can lie inside the solvabilizer of $L$ are those already contained in the solvable radical. In other words, the solvabilizer does not introduce new solvable Ideals beyond the radical, which reinforces the structural role of $R(L)$.

\begin{theorem} 
Let $L$ be a Lie algebra over an infinite field $F$ of characteristic greater than $2$ with radical $R$. If $N$ is an ideal of $L$ with $N\subseteq \sol(L)$ then $N\subseteq R$.
\end{theorem}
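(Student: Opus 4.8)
The plan is to reduce to the semisimple case via Lemma \ref{rad} and then show that an ideal landing inside the solvabilizer is forced to be solvable. First I would pass to the quotient $\bar L := L/R$. By Lemma \ref{rad} we have $R\subseteq\sol(L)$ and $\sol(\bar L)=\sol(L)/R$, so the image $\bar N:=(N+R)/R$ is an ideal of $\bar L$; moreover $N+R\subseteq\sol(L)$ gives $\bar N\subseteq\sol(L)/R=\sol(\bar L)$. Since $\bar L$ has trivial radical, it suffices to prove that the only ideal of a semisimple Lie algebra contained in its solvabilizer is the zero ideal: this yields $\bar N=0$, i.e. $N\subseteq R$. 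Thus I may assume $R=0$ and must show that a nonzero ideal $N$ of a semisimple $L$ cannot satisfy $N\subseteq\sol(L)$.

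Next I would recast the hypothesis. Because $N$ is an ideal it is an honest finite-dimensional subalgebra, and $N\subseteq\sol(L)$ says precisely that $\langle n,x\rangle$ is solvable for every $n\in N$ and every $x\in L$; in particular every $2$-generated subalgebra of $N$ is solvable. Since $R=0$ and $N\neq 0$, the ideal $N$ is non-solvable (a solvable ideal would lie in $R$), hence non-nilpotent, so by Engel's theorem $N$ contains an element $t$ whose adjoint action on $N$ is not nilpotent. This is the handle: I want to upgrade such a non-ad-nilpotent element into a genuinely non-solvable $2$-generated subalgebra $\langle a,b\rangle\subseteq N\subseteq\sol(L)$, which is the desired contradiction. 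The natural target is to locate inside $N$ a copy of $\mathfrak{sl}_2$ (or, more modestly, two elements generating a non-solvable subalgebra), where the hypothesis $\ch F>2$ is exactly what guarantees that such a copy is \emph{non}-solvable, in contrast to the degeneracy observed for $\mathfrak{sl}_2(\F_2)$.

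The hard part will be this last construction. Passing to the algebraic closure $\bar F$ (which preserves solvability of finitely generated subalgebras) and using the non-ad-nilpotent $t$, one obtains a nonzero eigenvalue of $\ad t$ on $N\otimes\bar F$; but an eigenvector together with $t$ spans only a solvable subalgebra, so a crude spectral argument does not suffice. What is needed is a root-vector pair $e_\alpha,e_{-\alpha}$ with $[e_\alpha,e_{-\alpha}]$ toral and nonzero, producing a three-dimensional non-solvable subalgebra. Here the infinite-field hypothesis enters: it lets me select generic elements so that a generic pair in $N$ does not fall into a common solvable subalgebra, by a covering argument in the spirit of Lemma \ref{subspaces}, and it also underlies the reduction in Lemma \ref{rad}. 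I expect the genuine obstacle to be controlling this $\mathfrak{sl}_2$-construction uniformly for \emph{all} semisimple Lie algebras in characteristic $p>2$, including the non-classical (Cartan-type) ones, whose structure theory is delicate. It is precisely the absence of a subalgebra structure on $\sol(L)$ for arbitrary subsets that leaves the analogous statement for general $\sol(L)$ open, whereas the \emph{ideal} hypothesis on $N$ is what makes tractable the local-to-global passage from ``every pair is solvable'' to ``$N$ is solvable''.
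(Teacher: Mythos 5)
Your reduction to the semisimple case is sound and matches the paper's first step: pass to $L/R$ using Lemma \ref{rad}, note that $(N+R)/R$ is an ideal of $L/R$ contained in $\sol(L/R)$, and reduce to showing that a semisimple Lie algebra has no nonzero ideal inside its solvabilizer. (The paper additionally reduces to an algebraically closed base field at the outset, via a Zariski-closure argument that uses the infinitude of $F$; your parenthetical appeal to base change is where that care would be needed.) But the core of the argument --- producing, from a nonzero ideal $N\subseteq\sol(L)$ of a semisimple $L$, two elements $x,y$ with $\langle x,y\rangle$ non-solvable and $x\in N$ --- is exactly the step you label ``the hard part'' and leave unresolved. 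Your Engel-theorem observation (a non-ad-nilpotent $t\in N$) does not by itself yield such a pair, as you acknowledge, and your proposed covering argument in the spirit of Lemma \ref{subspaces} is not developed into anything that closes the gap. So the proposal is incomplete precisely at the point where the real content lies.

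The paper closes this gap with two external structural results that you would need (or substitutes for them). First, a minimal ideal $I$ of $L$ contained in $\sol(L)$ has a \emph{unique} maximal ideal $J$ with $I/J$ simple, by Block's theorem on differentiably simple rings \cite[Theorem 9.3]{block}. Second, by Grunewald--Kunyavskii--Nikolova--Plotkin \cite[Lemma 3.2]{gknp}, the simple algebra $I/J$ admits a subalgebra $S/J$ with a quotient $S/K$ isomorphic to $\mathfrak{sl}_2(F)$; since $\mathfrak{sl}_2(F)$ is $2$-generated and non-solvable in characteristic $>2$, one lifts generators to $x,y\in S\subseteq I\subseteq\sol(L)$ with $\langle x,y\rangle+K=S$, so $\langle x,y\rangle$ surjects onto $\mathfrak{sl}_2(F)$ and cannot be solvable --- contradicting $x\in\sol(L)$. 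This second result is precisely what handles the Cartan-type simple algebras whose delicacy you correctly flag as the obstacle; without citing it (or proving an equivalent), the argument does not go through in positive characteristic.
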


\begin{proof} 
 Since the set of elements which generate a solvable subalgebra is closed in the Zariski topology and $L$ is dense in some algebraic closure of $F$, we may assume that $F$ is algebraically closed. Suppose that $N\not\subseteq R$. Then $(N+R)/R$ is an ideal of $L/R$ inside $\sol(L/R)$, so it suffices to assume that $L$ is semisimple and to show that $N=0$. If $N\neq 0$ there will be a minimal ideal $I$ of $L$ inside $\sol(L)$ and $I$ contains a unique maximal ideal $J$, by \cite[Theorem 9.3]{block}, so $I/J$ is simple. Moreover, $I/J$ has a subalgebra $S/J$ with quotient $\frac{S/J}{K/J}\cong S/K\cong sl_2(F)$, by \cite[Lemma 3.2]{gknp}. Let $x$ be such that $x+K\in S/K$. Then there exists $y+K$ such that $\langle x,y \rangle+K=\langle x+K,y+K\rangle =S/T$. But $\langle x,y\rangle$ cannot be solvable, so $x\notin \sol(L)$, a contradiction. Hence $N=0$ as required.
\end{proof}

Next Lemma shows that $\sol_L(x)$ depends only on the one-dimensional subalgebra generated by $x$; in particular, it is unchanged when $x$ is rescaled.

\begin{lemma}\label{Lie-2.11}
Let $L$ be a finite–dimensional Lie algebra over a field $F$.
Let $x,y\in L$, let $N$ be an ideal of $L$ with $N\subseteq \sol(L)$, and let $\varphi\in\operatorname{aut}(L)$ be any Lie–algebra automorphism of $L$.
\begin{enumerate}
\item If $\langle x\rangle=\langle y\rangle$, then $\sol_L(x) = \sol_L(y)$.

\item $\sol_L(\varphi(x)) = \varphi(\sol_L(x))$.

\item Let $\pi : L \longrightarrow L/N$ be the canonical projection. Then \[\sol_{L/N}(\pi(x)) = \pi(\sol_L(x)) = \sol_L(x)/N.\]
\end{enumerate}
\end{lemma}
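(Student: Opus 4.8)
The plan is to prove the three parts separately, each by a short direct argument from the definition of $\sol_L$, relying on the fact that the property ``$\langle a,b\rangle$ is solvable'' is exactly what defines membership. For part (1), I would observe that if $\langle x\rangle=\langle y\rangle$ then $y=\lambda x$ for some scalar $\lambda\in F$ (necessarily $\lambda\neq 0$ unless both are zero). Then for any $z\in L$, the subalgebra $\langle x,z\rangle$ equals $\langle y,z\rangle$, since $y$ and $x$ generate the same one-dimensional space and hence, together with $z$, the same subalgebra. Therefore $\langle x,z\rangle$ is solvable if and only if $\langle y,z\rangle$ is solvable, which gives $z\in\sol_L(x)\iff z\in\sol_L(y)$, i.e. $\sol_L(x)=\sol_L(y)$. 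This is essentially a restatement of the fact that the solvabilizer depends only on the generated subalgebra, already noted in Remark \ref{sl2(3)-basis}(i).

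For part (2), let $\varphi\in\operatorname{aut}(L)$. This is the content of Lemma \ref{sol-properties}(vi), which I may cite directly; alternatively I would give the one-line argument that $\varphi$ maps $\langle x,z\rangle$ isomorphically onto $\langle\varphi(x),\varphi(z)\rangle$, so solvability is preserved in both directions. Concretely, $z\in\sol_L(x)$ means $\langle x,z\rangle$ is solvable, hence $\varphi(\langle x,z\rangle)=\langle\varphi(x),\varphi(z)\rangle$ is solvable (solvability being invariant under isomorphism), which says $\varphi(z)\in\sol_L(\varphi(x))$. This shows $\varphi(\sol_L(x))\subseteq\sol_L(\varphi(x))$; applying the same to $\varphi^{-1}$ gives the reverse inclusion.

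For part (3), I would first note that since $N\subseteq\sol(L)$ and $F$ has no characteristic restriction stated here, I should use Lemma \ref{rad} and Lemma \ref{sol-properties}(iii) rather than invoking $N\subseteq R(L)$ directly. The equality $\pi(\sol_L(x))=\sol_L(x)/N$ is just notation, since $\pi$ is the quotient map and $N\subseteq\sol_L(x)$ by Lemma \ref{sol-properties}(i) together with $N\subseteq\sol(L)\subseteq\sol_L(x)$. The substantive claim is $\sol_{L/N}(\pi(x))=\sol_L(x)/N$. The inclusion $\sol_L(x)/N\subseteq\sol_{L/N}(\pi(x))$ follows from Lemma \ref{sol-properties}(ii). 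For the reverse inclusion, suppose $\pi(z)\in\sol_{L/N}(\pi(x))$, so $\langle\pi(x),\pi(z)\rangle=\langle x,z\rangle/N$ is solvable in $L/N$; since $N$ is a solvable ideal contained in $\langle x,z\rangle$, the extension argument (as in the proof of Lemma \ref{rad}) shows $\langle x,z\rangle$ is itself solvable, whence $z\in\sol_L(x)$ and $\pi(z)\in\sol_L(x)/N$.

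The main obstacle, and the only place requiring care, is the reverse inclusion in part (3): one must confirm that solvability of the image $\langle x,z\rangle/N$ together with solvability of $N$ forces solvability of $\langle x,z\rangle$. This is the standard fact that an extension of a solvable algebra by a solvable ideal is solvable, and it is precisely the mechanism already used in Lemma \ref{rad}; I expect to cite that lemma (or its proof technique) rather than reprove it. The other two parts are essentially formal and should follow in a line or two each.
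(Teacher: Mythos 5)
Your proposal is correct and takes essentially the same approach as the paper: parts (1) and (2) are the same direct computations from the definition (the paper runs (2) through $\varphi^{-1}$ in a chain of equivalences, which is equivalent to your two-inclusion version), and part (3) uses the same two inclusions, namely that homomorphic images of solvable algebras are solvable and that an extension of a solvable algebra by a solvable ideal is solvable. The only slip is notational: $N$ need not be contained in $\langle x,z\rangle$, so you should write $\langle \pi(x),\pi(z)\rangle=(\langle x,z\rangle+N)/N$ rather than $\langle x,z\rangle/N$ and conclude that $\langle x,z\rangle+N$, hence its subalgebra $\langle x,z\rangle$, is solvable --- exactly as the paper phrases it --- and your observation that this step genuinely needs $N$ to be solvable (not merely $N\subseteq\sol(L)$) is a legitimate point that the paper's own statement leaves implicit.
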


\begin{proof}
(1)\; Suppose $\langle x\rangle=\langle y\rangle$. Then for every
$a\in L$ we have $\langle a, x\rangle \subseteq \langle a, y\rangle$ and $\langle a, y\rangle \subseteq \langle a, x\rangle$. That is, $\langle a, x\rangle = \langle a, y\rangle$ for all $a\in L$. Therefore, $\sol_L(x) = \sol_L(y)$. \smallskip

\noindent (2)\; Let $z\in L$.  Then
\begin{align*}
z \in \sol_L(\varphi(x)) & \Longleftrightarrow\; \langle z, \varphi(x)\rangle \text{ is solvable} \\
& \Longleftrightarrow\;  \langle\varphi^{-1}(z),x\rangle \text{ is solvable}\\
& \Longleftrightarrow\; \varphi^{-1}(z)\in\sol_L(x)\\
& \Longleftrightarrow\; z\in\varphi(\sol_L(x)).
\end{align*}

\smallskip
\noindent (3)\; If $a\in\sol_L(x)$ then $\langle a,x\rangle$ is solvable; and its image $\pi(\langle a,x\rangle)=\langle \pi(a),\pi(x)\rangle$ is solvable because $\pi$ is a homomorphism.
Thus $\pi(a)\in\sol_{\,L/N}\bigl(\pi(x)\bigr)$ and $\pi(\sol_L(x)) \subseteq \sol_{L/N}(\pi(x))$.

Conversely, let $b+N\in\sol_{L/N}(\pi(x))$. Then the subalgebra $\langle b+N, x+N\rangle$ of $L/N$ is solvable, so $\langle b, x\rangle+N$ is solvable; hence $\langle b,x\rangle$ itself is solvable. Therefore $b\in\sol_L(x)$ and $b+N\in \pi(\sol_L(x))$. So $\sol_{L/N}(\pi(x))=\pi(\sol_L(x))$, which is the same as $\sol_L(x)/N$.

This result shows that solvabilizers only depend on the one-dimensional subalgebra $\langle x \rangle$, and are stable under automorphisms and quotients. Hence, the notion of solvabilizer is robust under standard structural operations 
on Lie algebras.



\end{proof}

\begin{lemma}\label{Lie-2.12}
Let $L$ be a finite–dimensional Lie algebra over a field $F$. Then, for every $x\in L$, $\sol_L(x)$ is a disjoint union of some cosets of $Fx$. More precisely, for every $a\in \sol_L(x)$ one has $a + F x \subseteq \sol_L(x)$, and hence
\[\sol_L(x) = \bigsqcup_{a\in\mathcal R}\,(a+F x)\]
for some set $\mathcal R\subseteq \sol_L(x)$ of representatives of the
cosets of $F x$ contained in $\sol_L(x)$.
\end{lemma}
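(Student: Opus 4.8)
The plan is to show that $\sol_L(x)$ is stable under the addition of arbitrary scalar multiples of $x$, which immediately yields the claimed coset decomposition. The key observation is that for any $a \in L$ and any scalar $\lambda \in F$, the two-generated subalgebra $\langle a + \lambda x, x\rangle$ coincides with $\langle a, x\rangle$. Indeed, since $x$ is a generator of the right-hand side, the element $a = (a + \lambda x) - \lambda x$ lies in $\langle a+\lambda x, x\rangle$, giving one inclusion; conversely $a + \lambda x$ lies in $\langle a, x\rangle$, giving the other. Thus $\langle a + \lambda x, x\rangle = \langle a, x\rangle$ as subalgebras of $L$.

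\smallskip

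From this equality the main step follows directly. Suppose $a \in \sol_L(x)$, so that $\langle a, x\rangle$ is solvable. For any $\lambda \in F$, we have $\langle a + \lambda x, x\rangle = \langle a, x\rangle$, which is solvable; hence $a + \lambda x \in \sol_L(x)$. Since $\lambda$ was arbitrary, this proves the containment $a + Fx \subseteq \sol_L(x)$.

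\smallskip

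To assemble the global statement, I would observe that the relation ``$a \sim b$ iff $a - b \in Fx$'' partitions $L$ into the cosets of the one-dimensional subspace $Fx$. The containment just established shows that $\sol_L(x)$ is a union of entire such cosets: if a coset $a + Fx$ meets $\sol_L(x)$, then it is contained in $\sol_L(x)$. Choosing a set $\mathcal{R} \subseteq \sol_L(x)$ of representatives, one for each coset of $Fx$ lying inside $\sol_L(x)$, gives the disjoint union
\[
\sol_L(x) = \bigsqcup_{a \in \mathcal{R}} (a + Fx),
\]
where disjointness is automatic since distinct cosets of a subspace are disjoint. (One may note for completeness that $x \in \sol_L(x)$ by Remark~\ref{sl2(3)-basis}(1), so the coset $Fx$ itself is among those appearing, with $0$ as a valid representative.)

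\smallskip

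I do not anticipate a genuine obstacle here: the entire argument rests on the elementary generator-replacement identity $\langle a + \lambda x, x\rangle = \langle a, x\rangle$, and everything else is the standard fact that a set closed under a subspace-translation is a union of cosets. The only point requiring mild care is the degenerate case $x = 0$, where $Fx = 0$ and the cosets are singletons, so the statement holds trivially; one could either treat this explicitly or simply note that the decomposition into singleton cosets is vacuously valid.
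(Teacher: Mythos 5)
Your proposal is correct and follows essentially the same route as the paper's own proof: both rest on the generator-replacement identity $\langle a+\lambda x, x\rangle = \langle a,x\rangle$ and then conclude by the standard coset partition argument. Your version simply spells out the justification of that identity and the degenerate case $x=0$ in slightly more detail than the paper does.
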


\begin{proof}
Fix $x\in L$ and $a\in \sol_L(x)$. For any $t\in F$ we have $\langle a+ t x, x\rangle = \langle a, x\rangle$.
As $\langle a,x\rangle$ is solvable by the choice of $a$, the same holds for $\langle a+t x,x\rangle$, so $a+t x\in \sol_L(x)$. Thus $a+F x\subseteq \sol_L(x)$.

Cosets $a+F x$ in the additive group $L$ are either equal or disjoint, hence $\sol_L(x)$ is a disjoint union of those cosets that it contains; choose one representative $a$ from each such coset to form $\mathcal{R}$.
\end{proof}

\begin{corollary}\label{divisible-by-q}
If $x\in L$ and $F$ is a finite field with $|F|=q$, then $q$ divides $|\sol_L(x)|$. 
\end{corollary}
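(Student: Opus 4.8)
The plan is to read off the conclusion directly from the coset decomposition established in Lemma~\ref{Lie-2.12}. That lemma asserts that $\sol_L(x)$ is a disjoint union of cosets of the one–dimensional subspace $Fx$, namely
\[\sol_L(x) = \bigsqcup_{a\in\mathcal R}\,(a+F x),\]
so the entire argument reduces to counting the elements in a single coset and summing over the finite index set $\mathcal R$.

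First I would treat the generic case $x\neq 0$. Here $Fx$ is a one–dimensional subspace of $L$, hence in bijection with the field $F$, so $|Fx|=|F|=q$. Each coset $a+Fx$ is a translate of $Fx$ and therefore also contains exactly $q$ elements. Since the union above is disjoint, writing $k:=|\mathcal R|$ gives $|\sol_L(x)|=kq$, which is manifestly divisible by $q$. This is the whole content of the statement in the case that is actually of interest (the one relevant to the solvable graph, whose vertices are nonzero).

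The only remaining point is the degenerate case $x=0$, where $Fx=\{0\}$ and the cosets are singletons, so the counting above does not directly yield a factor of $q$. Here I would argue separately by computing $\sol_L(0)$ outright: for every $y\in L$ the generated subalgebra $\langle 0,y\rangle=\langle y\rangle=Fy$ is one–dimensional, hence abelian and in particular solvable, so $\sol_L(0)=L$. Consequently $|\sol_L(0)|=|L|=q^{\dim L}$, which is divisible by $q$ provided $L\neq 0$.

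I do not anticipate any genuine obstacle: all the real work is carried by Lemma~\ref{Lie-2.12}, and what remains is the elementary observation that a coset of a one–dimensional $F$–subspace has exactly $q$ elements. The only step requiring a moment's care is isolating the trivial case $x=0$ (equivalently, noting that the statement is intended for nonzero $x$), which the direct identification $\sol_L(0)=L$ disposes of cleanly.
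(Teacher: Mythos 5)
Your proposal is correct and follows exactly the route the paper intends: the corollary is stated as an immediate consequence of the coset decomposition in Lemma~\ref{Lie-2.12}, with each coset of $Fx$ contributing $q$ elements. Your separate treatment of the degenerate case $x=0$ (via $\sol_L(0)=L$) is a small point of extra care that the paper leaves implicit, but it does not change the argument.
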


\begin{corollary}\label{partition-by-SolL}
Let $L$ be a finite–dimensional Lie algebra over a field $F$. Assume that $\sol(L)$ is an additive subgroup of $L$.
Then for every $x\in L$,
\[\sol_L(x) = \bigsqcup_{a\in\mathcal R_x} (a+\sol(L))\]
for some set of representatives $\mathcal R_x\subseteq \sol_L(x)$.
Consequently, if $F$ is finite, $|\sol(L)| \mid |\sol_L(x)|$ and $\dim\sol(L) \leq \dim\sol_L(x)$ if each $\sol_L(x)$ is a subspace of $L$.
\end{corollary}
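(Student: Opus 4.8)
The plan is to mirror the coset argument of Lemma \ref{Lie-2.12}, but now with $\sol(L)$ playing the role that $Fx$ played there. The essential input is Lemma \ref{Lie-2.7}, which asserts $\sol(L)+\sol_L(x)=\sol_L(x)$ for every $x\in L$.

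First I would fix $x\in L$ and an arbitrary $a\in\sol_L(x)$, and deduce from Lemma \ref{Lie-2.7} that the whole coset $a+\sol(L)$ is contained in $\sol_L(x)$: indeed, for any $s\in\sol(L)$ we have $a+s\in\sol(L)+\sol_L(x)=\sol_L(x)$. Thus $\sol_L(x)$ is a union of cosets of $\sol(L)$. Next, since $\sol(L)$ is an additive subgroup of $(L,+)$ by hypothesis, its cosets partition $L$ and any two are either equal or disjoint. Hence the cosets contained in $\sol_L(x)$ form a partition of $\sol_L(x)$; selecting one representative $a$ from each such coset gives a set $\mathcal R_x\subseteq\sol_L(x)$ with $\sol_L(x)=\bigsqcup_{a\in\mathcal R_x}(a+\sol(L))$, as claimed.

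For the numerical consequences, when $F$ is finite each coset $a+\sol(L)$ has exactly $|\sol(L)|$ elements, so the disjoint union yields $|\sol_L(x)|=|\mathcal R_x|\cdot|\sol(L)|$, whence $|\sol(L)|\mid|\sol_L(x)|$. For the dimension bound, if each $\sol_L(x)$ is a subspace of $L$, then so is $\sol(L)=\bigcap_{x\in L}\sol_L(x)$ by Lemma \ref{Lie-2.9}(5), being an intersection of subspaces; the inclusion $\sol(L)\subseteq\sol_L(x)$ furnished by Lemma \ref{sol-properties}(i) then gives $\dim\sol(L)\le\dim\sol_L(x)$.

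The argument is elementary once Lemma \ref{Lie-2.7} is invoked, so there is no deep obstacle here. The only point requiring attention is that the union-of-cosets property genuinely produces \emph{full} cosets, rather than merely a set closed under addition by $\sol(L)$; this is precisely what the additive-subgroup hypothesis together with Lemma \ref{Lie-2.7} guarantee, and it is what makes the disjoint decomposition (and hence the clean divisibility count) valid.
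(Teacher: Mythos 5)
Your proposal is correct and follows essentially the same route as the paper: Lemma \ref{Lie-2.7} gives that $\sol_L(x)$ is a union of cosets of the additive subgroup $\sol(L)$, disjointness of cosets yields the partition, and counting gives the divisibility and dimension statements. Your extra observation that $\sol(L)=\bigcap_{x\in L}\sol_L(x)$ is a subspace when each $\sol_L(x)$ is (via Lemma \ref{Lie-2.9}(5)) is a slightly more careful justification of the dimension inequality than the paper's one-line remark, but it is the same argument in substance.
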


\begin{proof}
Lemma \ref{Lie-2.7} gives $\sol(L)+\sol_L(x)=\sol_L(x)$; if $\sol(L)$ is an additive subgroup, $\sol_L(x)$ is a union of cosets $a+\sol(L)$,
pairwise disjoint. The divisibility and the dimension inequality follow from counting cosets. 
\end{proof}

\begin{proposition}\label{Lie-2.13}
Let $L$ be a finite–dimensional S-Lie algebra over a field $F$, and let $x\in L$. Then:
\begin{enumerate}
\item $C_L(x)\subseteq \sol_L(x)$, $\dim C_L(x)\le\dim \sol_L(x)$.  In particular, if $F$ is finite, then $|C_L(x)|\le |\sol_L(x)|$.

\item There exists $\mathcal R_x\subseteq \sol_L(x)$ such that
\[\sol_L(x) = \bigsqcup_{a\in\mathcal R_x} (a+C_L(x)),\]      
and consequently if $F$ is finite $|C_L(x)|$ divides $|\sol_L(x)|$. 
\end{enumerate}
\end{proposition}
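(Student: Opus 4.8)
The plan is to prove the two claims in sequence, exploiting the fact that in an $S$-Lie algebra each $\sol_L(x)$ is a genuine subalgebra, so set-theoretic statements can be upgraded to linear-algebraic and counting statements. For part (1), the containment $C_L(x)\subseteq\sol_L(x)$ is the crux: if $y\in C_L(x)$ then $[x,y]=0$, so the subalgebra $\langle x,y\rangle$ is spanned by $x$ and $y$ with trivial bracket, hence it is abelian, therefore solvable, giving $y\in\sol_L(x)$. Once this inclusion is established, the dimension inequality $\dim C_L(x)\le\dim\sol_L(x)$ follows immediately because $\sol_L(x)$ is a subspace (indeed a subalgebra, by the $S$-Lie hypothesis) and $C_L(x)$ is a subspace contained in it. For the final cardinality statement, when $F$ is finite a subspace of larger or equal dimension has larger or equal cardinality, so $|C_L(x)|\le|\sol_L(x)|$.

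For part (2), I would observe that $C_L(x)$ is not merely a subspace of $L$ but a subalgebra of $\sol_L(x)$, and in particular an additive subgroup of the additive group $(\sol_L(x),+)$. The decomposition into cosets $a+C_L(x)$ is then just the partition of the abelian group $\sol_L(x)$ into cosets of its subgroup $C_L(x)$: distinct cosets are disjoint, their union is all of $\sol_L(x)$, and choosing one representative from each gives the index set $\mathcal R_x$. This is exactly the additive-group analogue of the coset decomposition already used in Lemma~\ref{Lie-2.12} and Corollary~\ref{partition-by-SolL}, but now with respect to the subgroup $C_L(x)$ rather than $Fx$ or $\sol(L)$. The divisibility $|C_L(x)|\mid|\sol_L(x)|$ is then Lagrange's theorem for the finite abelian group $\sol_L(x)$ with subgroup $C_L(x)$.

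The main conceptual point — and the only place where the $S$-Lie hypothesis is genuinely used — is that $\sol_L(x)$ must be a subalgebra (hence an additive subgroup) for the coset argument to make sense; without it, $\sol_L(x)$ is only a union of cosets of $Fx$ (Lemma~\ref{Lie-2.12}) and the quotient by $C_L(x)$ would not be well-defined. I expect no serious obstacle: both parts reduce to the elementary observation that centralizing elements generate abelian (hence solvable) subalgebras, combined with Lagrange's theorem applied inside the additive group $\sol_L(x)$. The only care required is to invoke Definition~\ref{S-Lie} explicitly when asserting that $\sol_L(x)$ is closed under addition, so that $C_L(x)$ sits inside it as a subgroup and the coset partition is legitimate.
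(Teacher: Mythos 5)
Your proposal is correct and follows essentially the same route as the paper: the inclusion $C_L(x)\subseteq\sol_L(x)$ via abelian (hence solvable) two-generated subalgebras, the dimension/cardinality comparison from the $S$-Lie hypothesis making $\sol_L(x)$ a subspace, and the coset decomposition of the additive group $\sol_L(x)$ by its subgroup $C_L(x)$ with Lagrange's theorem giving the divisibility. You are in fact slightly more explicit than the paper in flagging exactly where the $S$-Lie hypothesis is needed, which is a welcome clarification rather than a deviation.
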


\begin{proof}
(1) If $c\in C_L(x)$ then $[c,x]=0$, so $\langle c,x\rangle$ is abelian and hence soluble; thus $c\in \sol_L(x)$. Since $L$ is an S-Lie algebra, the rest is clear. 

(2)  Since $C_L(x)\subseteq \sol_L(x)$, the standard coset decomposition of an additive group by a subgroup applies:
\[\sol_L(x) = \bigsqcup_{a\in\mathcal R_x} (a+C_L(x)).\]   
The counting statements are immediate.
\end{proof}

\begin{remark}
Taken together, the last four results reveal a layered coset structure for every solvabilizer $\sol_L(x)$. At the most elementary level, $\sol_L(x)$ decomposes into cosets of the one-dimensional subspace $Fx$, forcing divisibility of $|\sol_L(x)|$ by $|F|$ in the finite case. When $\sol(L)$ is additive, $\sol_L(x)$ refines further into cosets of $\sol(L)$, 
producing a stronger divisibility condition by $|\sol(L)|$. Finally, in the setting of $S$--Lie algebras, $\sol_L(x)$ admits the even finer decomposition into cosets of the centralizer $C_L(x)$, so that $|C_L(x)|$ divides $|\sol_L(x)|$. 
This hierarchy can be summarized as $|F|$ divides $|\sol_L(x)|$, $|\sol(L)|$ divides $|\sol_L(x)|$ and $|C_L(x)|$ divides $|\sol_L(x)|$, depending on the structural assumptions imposed on $L$. 
\end{remark}


\textbf{Conjecture 1.}\label{strong-div}
Let $L$ be a finite–dimensional Lie algebra over a finite field $F$. Then  $|L|$ divides $\sum_{x\in L} |\sol_L(x)|$. Equivalently, the average size $\frac{1}{|L|}\sum_{x\in L} |\sol_L(x)|$ is an integer. We computed $\sum_{x\in L}|\sol_L(x)|$ exhaustively in the following cases; in each case the divisibility by $|L|$ holds:

\begin{table}[H]
    \centering
    \begin{tabular}{c|c|c|c}
L & $|L|$ & $\sum_{x\in L}|\sol_L(x)|$ & \text{quotient} \\
\hline
$\mathfrak{sl}_2(\F_3)$ & 27  & 297   & 11 \\
$\mathfrak{sl}_2(\F_5)$ & 125 & 3625  & 29 \\
$\mathfrak{gl}_2(\F_3)$ & 81  & 2673  & 33 \\
    \end{tabular}
    \caption{Illustration of Conjecture 1}
    \label{placeholder}
\end{table}

These instances suggest that Conjecture 1 may hold in full generality.



\begin{lemma}\label{Lie-2.20}
Let $L$ be a finite–dimensional Lie algebra over a field $F$, and let $N\lhd L$ be an ideal with $N\subseteq \sol(L)$. Assume $N$ is solvable (This is automatic in characteristic $0$). Then $L$
is an  S–Lie algebra if and only if $L/N$ is an S–Lie algebra.
\end{lemma}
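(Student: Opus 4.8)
The plan is to reduce the biconditional to a vertex-by-vertex comparison of solvabilizers across the projection $\pi\colon L\to L/N$, using the quotient identity already recorded in Lemma \ref{Lie-2.11}(3). That result gives, for every $x\in L$,
\[
\sol_{L/N}(\pi(x)) \;=\; \pi\bigl(\sol_L(x)\bigr)\;=\;\sol_L(x)/N ,
\]
which applies since $N\subseteq\sol(L)\subseteq\sol_L(x)$ by Lemma \ref{sol-properties}(i). With this identity in hand, the two implications rest on two elementary facts about Lie-algebra homomorphisms: the image of a subalgebra is a subalgebra, and the preimage of a subalgebra is a subalgebra. The forward direction ($L$ an $S$-Lie algebra $\Rightarrow L/N$ an $S$-Lie algebra) is then immediate: if $\sol_L(x)$ is a subalgebra of $L$, its image $\pi(\sol_L(x))=\sol_{L/N}(\pi(x))$ is a subalgebra of $L/N$, and since $\pi$ is surjective every element of $L/N$ arises as some $\pi(x)$.

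For the converse I first need to know that $\sol_L(x)$ is exactly the full preimage of $\sol_{L/N}(\pi(x))$, that is, that $\sol_L(x)$ is $N$-saturated: $\sol_L(x)+N=\sol_L(x)$. This is the one place where the solvability of $N$ is used. Fix $a\in\sol_L(x)$ and $n\in N$; I would set $K:=\langle a,x\rangle+N$, observe that $N$ remains an ideal of $K$ and that $K/N\cong\langle a,x\rangle/(\langle a,x\rangle\cap N)$ is solvable as a quotient of the solvable algebra $\langle a,x\rangle$, so $K$ is an extension of a solvable algebra by the solvable ideal $N$, hence solvable. Since $a+n,\,x\in K$, the subalgebra $\langle a+n,x\rangle\subseteq K$ is solvable, giving $a+n\in\sol_L(x)$. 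In characteristic zero this hypothesis is free: by Theorem \ref{Thompson}, $\sol(L)=R(L)$ is solvable, so any ideal $N\subseteq\sol(L)$ is automatically solvable.

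Granting $N$-saturation, the converse follows: if $L/N$ is an $S$-Lie algebra, then $\sol_{L/N}(\pi(x))$ is a subalgebra of $L/N$, so $\pi^{-1}\bigl(\sol_{L/N}(\pi(x))\bigr)$ is a subalgebra of $L$; but by the quotient identity and saturation,
\[
\pi^{-1}\bigl(\sol_{L/N}(\pi(x))\bigr)=\pi^{-1}\bigl(\pi(\sol_L(x))\bigr)=\sol_L(x)+N=\sol_L(x),
\]
so $\sol_L(x)$ is a subalgebra for every $x\in L$. I expect the $N$-saturation step to be the main obstacle, or at least the only nonformal point, since without the solvability of $N$ the full preimage $\pi^{-1}(\pi(\sol_L(x)))=\sol_L(x)+N$ can be strictly larger than $\sol_L(x)$, and the converse implication breaks down; everything else is a routine application of the correspondence between subalgebras of $L$ containing $N$ and subalgebras of $L/N$.
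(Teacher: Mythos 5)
Your proof is correct and follows essentially the same route as the paper's: both directions reduce to the correspondence $\sol_{L/N}(x+N)=\sol_L(x)/N$ together with the bijection between subalgebras of $L$ containing $N$ and subalgebras of $L/N$, with the solvability of $N$ used exactly where you use it (to pull solvability of $\langle a,x\rangle+N$ back from the quotient). If anything, your write-up is more complete than the paper's, whose proof only records the elementwise equivalence ``$\langle x,y\rangle$ solvable $\Leftrightarrow$ $\langle x+N,y+N\rangle$ solvable'' and leaves implicit both the $N$-saturation $\sol_L(x)+N=\sol_L(x)$ and the final passage to the subalgebra correspondence --- precisely the two points you make explicit.
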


\begin{proof}
Suppose first that $L$ is an $S$-Lie algebra and let $x+N,y+N\in L/N$. Then $\langle x+N,y+N\rangle =\langle x,y\rangle +N$, which is solvable since $\langle x,y\rangle$ and $N$ are both solvable.
\par

Next, suppose that $L/N$ is an $S$-Lie algebra and let $x,y\in L$. Then $(\langle x,y\rangle +N)/N$ is solvable, whence $\langle x,y\rangle +N$ and so $\langle x,y\rangle$ is solvable
\end{proof}

\begin{proposition}\label{Lie-2.22}
Let $L$ be a finite–dimensional Lie algebra over a field $F$. Then
\begin{enumerate}
\item If $L$ is solvable, then $L$ is an $S$–Lie algebra; in fact $\sol_L(x)=L$ for every $x\in L$.
\item Conversely, assume that $F$ is algebraically closed of characteristic zero. If $L$ is an $S$–Lie algebra, then $L$ is solvable.
\end{enumerate}
In particular, under the standing hypothesis in \textup{(2)} we have: $L$ is solvable if and only if $L$ is an $S$–Lie algebra.
\end{proposition}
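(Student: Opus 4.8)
The plan is to treat the two parts separately, with part (1) immediate and part (2) carrying all the content through a contrapositive reduction to $\mathfrak{sl}_2(F)$. For part (1), if $L$ is solvable then every subalgebra of $L$ is solvable, so $\langle x,y\rangle$ is solvable for all $x,y\in L$; hence $\sol_L(x)=L$ for every $x$, and since $L$ is a subalgebra of itself this exhibits $L$ as an $S$–Lie algebra. The one structural remark I would isolate first, for use in part (2), is that the $S$–Lie property passes to subalgebras: for any subalgebra $H\le L$ and $x\in H$ the subalgebra generated by $x$ and $y$ is the same whether formed inside $H$ or inside $L$, so $\sol_H(x)=H\cap\sol_L(x)$; if $L$ is $S$–Lie this is an intersection of two subalgebras of $L$ and therefore a subalgebra of $H$.

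For part (2) I would argue the contrapositive: assuming $L$ is not solvable, I produce an element whose solvabilizer is not closed under addition. Since $\ch F=0$, Theorem \ref{Thompson} gives $\sol(L)=R(L)$, which is a solvable ideal contained in $\sol(L)$, so Lemma \ref{Lie-2.20} applies and shows that $L$ is an $S$–Lie algebra if and only if $\bar L:=L/R(L)$ is. As $L$ is not solvable, $\bar L$ is a nonzero semisimple Lie algebra over the algebraically closed field $F$ of characteristic zero. By the structure theory of semisimple Lie algebras, $\bar L$ is a direct sum of simple ideals, and a root-space decomposition of any one summand yields a nonzero root $\alpha$; the associated $\mathfrak{sl}_2$–triple then spans a subalgebra $\mathfrak{s}\cong\mathfrak{sl}_2(F)$ of $\bar L$.

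Finally, Remark \ref{sl2(3)-basis}(3) exhibits an element $h\in\mathfrak{sl}_2(F)$ (valid whenever $\ch F\neq 2$, in particular in characteristic zero) for which $\sol_{\mathfrak{sl}_2(F)}(h)$ is not closed under addition, so $\mathfrak{sl}_2(F)$ is not an $S$–Lie algebra. By the inheritance remark of the first paragraph, $\bar L$ cannot be an $S$–Lie algebra, and hence neither is $L$; this is the desired contradiction, and the final ``in particular'' statement then follows by combining (1) and (2). I expect the main obstacle to be the structural input that every nonzero semisimple Lie algebra over an algebraically closed field of characteristic zero contains a copy of $\mathfrak{sl}_2(F)$: this is standard but relies on the full root-space machinery, and it is exactly what forces both hypotheses in (2). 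Indeed, over a non-closed field the anisotropic form $\mathfrak{su}_2$ is semisimple yet $S$–Lie, since there the solvabilizer of every nonzero element is just the line it spans, so algebraic closure cannot be dropped.
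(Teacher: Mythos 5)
Your proof is correct, but it follows a genuinely different route from the paper's. The paper argues directly: in the semisimple case it takes two distinct Borel subalgebras $B,B'$, uses conjugacy and the dimension bound $\dim B>\tfrac12\dim L$ to force $B\cap B'\neq 0$, and then invokes Lemma~\ref{solvabilizer1} (over an infinite field, $\sol_L(x)$ is a subspace iff $x$ lies in a \emph{unique} maximal solvable subalgebra) to conclude that any $x\in B\cap B'$ has non-subalgebra solvabilizer; a separate second case handles $L=S\dot{+}R$ via the Levi decomposition. You instead reduce uniformly to $\bar L=L/R(L)$ using Theorem~\ref{Thompson} and Lemma~\ref{Lie-2.20}, locate a copy of $\mathfrak{sl}_2(F)$ inside the nonzero semisimple quotient via a root-space $\mathfrak{sl}_2$-triple, quote the explicit computation of Remark~\ref{sl2(3)-basis}(3) showing $e,f\in\sol(h)$ but $e+f\notin\sol(h)$, and push the failure up through the observation that the $S$-Lie property is inherited by subalgebras --- which is legitimate and is in fact exactly Lemma~\ref{Lie-2.10} with $A=\{x\}$, since $\sol_H(x)=H\cap\sol_L(x)$. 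What your approach buys is concreteness and uniformity: one explicit three-element witness replaces both the Borel-intersection dimension count and the separate Levi-decomposition case, and it needs only the existence of $\mathfrak{sl}_2$-triples rather than conjugacy of Borel subalgebras. What the paper's approach buys is the structural reformulation through Lemma~\ref{solvabilizer1}, which identifies the obstruction as non-uniqueness of maximal solvable subalgebras through a point. Two cosmetic points: you announce a contrapositive but close by calling the conclusion a ``contradiction''; and your closing example of a non-split three-dimensional simple algebra showing that algebraic closure is needed duplicates the example the paper gives immediately after the proposition.
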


\begin{proof}
(1) is clear, so assume that $F$ is algebraically closed of characteristic zero and that $L$ is an $S$-Lie algebra. Suppose first that $L$ is semisimple. By general theory, it is clear that $L$ has more than one Borel subalgebra. Let $B$, $B'$ be two different Borel subalgebras. A standard Borel subalgebra has dimension greater than $\frac{1}{2}\dim L$ and all Borel subalgebras are conjugate. So, since $ \dim (B+B')=\dim B+\dim B'-\dim B\cap B'$, we have $B\cap B'\neq 0$. Let $x\in B\cap B'$. Then $sol_L(x)$ is not a subalgebra, by Lemma \ref{solvabilizer1} and the fact that the field is infinite. Thus $L$ is not an $S$-Lie algebra.
\par

Suppose now that $L$ is not semisimple nor solvable, so $L=S\dot{+} R$ where $R$ is the radical and $S$ is a Levi subalgebra. Then, if $x\in R$, any subalgebra of the form $R+B$, where $B$ is a Borel subalgebra of $S$ is a maximal solvable subalgebra containing $x$. As there is more than one such subalgebra, $L$ is not an $S$-Lie algebra. 
\end{proof}

Note that the assumption of algebraic closure is necessary in the above result, as the next example shows.

\begin{example}  Let $L$ be any three-dimensional non-split simple Lie algebra. Then all of the subalgebras are one-dimensional, so $sol_L(x) = Fx$ for all $x \in L$. Hence, these are $S$-Lie algebras.
\end{example}

Simple $S$-Lie algebras can also exist over a finite field, as is shown next.

\begin{example}
Let $L$ be the Lie algebra given in Example \ref{char2} over the field $F=\F_2$. Then  $sol_L(a) = L$, $sol_L(b) = Fa + Fb$, $sol_L(c) = Fa + Fc$, $sol_L(a+b) = Fa + Fb$, $sol_L(a+c) = Fa + Fc$, $sol_L(b+c) = Fa + F(b+c)$, $sol_L(a+b+c) = Fa + F(b+c)$, all of which are subalgebras. Hence, this is an $S$-Lie algebra.
\end{example}

However, it remains an open question as to whether $S$-Lie algebras over a finite field of characteristic $p>2$ are always solvable.





\section{The solvable graph of a Lie algebra}

\begin{definition}
Let $L$ be a finite-dimensional non-solvable Lie algebra. The solvable graph of $L$, denoted by $\Gamma_{\mathfrak{S}}(L)$, is a simple undirected graph whose set of vertexes is $L\setminus \sol(L)$, and two vertices $x$ and $y$ are adjacent if and only if $\langle x, y\rangle$ is a solvable subalgebra of $L$.    
\end{definition}

\begin{example}
The solvable graphs of $\mathfrak{gl}_2(\F_2)$ and $\mathfrak{sl}_2(\F_2)$ are empty.
\end{example}

\begin{example}\label{Example4.2}
The smallest non-empty instance of the solvable graph arises from $\mathfrak{sl}_2(\F_3)$. Since $\sol(\mathfrak{sl}_2(F))=0$ whenever $\ch(F)\neq 2$, all non-zero elements of $\mathfrak{sl}_2(\F_3)$ become vertices of $\Gamma_\mathfrak{S}(\mathfrak{sl}_2(\F_3))$, giving a graph of order $26$. A computation with SageMath confirms that the graph has size $109$. It has twelve vertices of degree $13$, eight of degree $7$, and six of degree $1$. The three degree classes correspond to the three spectral types of matrices in $\mathfrak{sl}_2(\F_3)$ (no eigenvalues, one eigenvalue, two eigenvalues). Let $B=\{f, h, e\}$ the basis of $\mathfrak{sl}_2(\F_3)$ described in Remark \ref{sl2(3)-basis}.  For instance, $f=\bigl(\begin{smallmatrix}0&0\\ 1&0\end{smallmatrix}\bigr)$ has degree $7$, $h=\bigl(\begin{smallmatrix}1&0\\ 0&-1\end{smallmatrix}\bigr)$ has degree $13$, while $e+f+h$ has degree $1$, its only neighbour being $2(e+f+h)$. This corroborates, in the smallest case, the description in Theorem~\ref{sl2-degree-seq}. The labels $(a,b,c)$ that we use for the vertices correspond to their coordinates with respect to the base $B$.

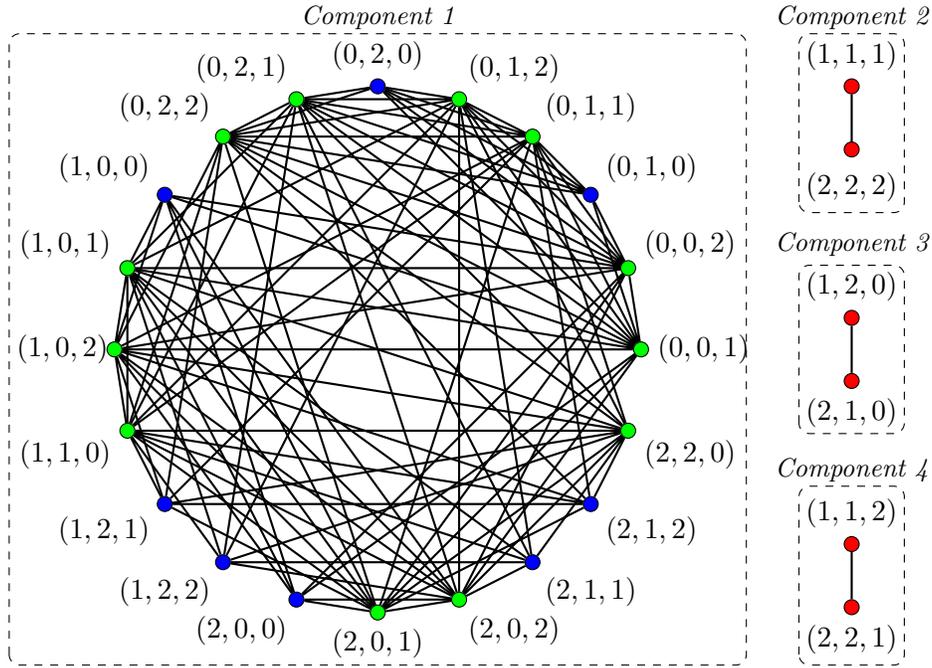
\begin{figure}[H]
\begin{center}
\begin{tikzpicture}[scale=0.7, every node/.style={inner sep=2pt}]

\node[draw=none,rectangle,inner sep=1pt,font=\small] at (9,6.3) {Component 2};
\draw[dashed,rounded corners] (8,2.6) rectangle (10,6);
\node[circle,draw,fill=red,minimum size=3pt] (v25) at (9,5) {};
\node[font=\scriptsize,inner sep=0pt,label={[label distance=1.2mm] $(1,1,1)$}] at (9,5) {};
\node[circle,draw,fill=red,minimum size=3pt] (v12) at (9,3.8) {};
\node[font=\scriptsize,inner sep=0pt,label={[label distance=1.2mm] $(2,2,2)$}] at (9,2.5) {};

 \node[draw=none,rectangle,inner sep=1pt,font=\small] at (9,2) {Component 3};
 \draw[dashed,rounded corners] (8,-1.6) rectangle (10,1.6);
   \node[circle,draw,fill=red,minimum size=3pt] (v14) at (9,0.6) {};
 \node[font=\scriptsize,inner sep=0pt,label={[label distance=1.2mm] $(1,2,0)$}] at (9,0.6) {};
  \node[circle,draw,fill=red,minimum size=3pt] (v20) at (9,-0.6) {};
  \node[font=\scriptsize,inner sep=0pt,label={[label distance=1.2mm] $(2,1,0)$}] at (9,-1.8) {};

 \node[draw=none,rectangle,inner sep=1pt,font=\small] at (9,-2.3) {Component 4};
 \draw[dashed,rounded corners] (8,-6) rectangle (10,-2.6);
 \node[circle,draw,fill=red,minimum size=3pt] (v13) at (9,-3.7) {};
 \node[font=\scriptsize,inner sep=0pt,label={[label distance=1.2mm] $(1,1,2)$}] at (9,-3.7) {};
 \node[circle,draw,fill=red,minimum size=3pt] (v24) at (9,-4.9) {};
 \node[font=\scriptsize,inner sep=0pt,label={[label distance=1.2mm] $(2,2,1)$}] at (9,-6.1) {};

  \node[circle,draw,fill=green] (v0) at (5,0) {};
  \node[font=\scriptsize,inner sep=0pt,label={[label distance=1.2mm] 0.0: $(0,0,1)$}] at (5,0) {};
  
  \node[circle,draw,fill=green,minimum size=3pt] (v1) at (4.755,1.545) {};
  \node[font=\scriptsize,inner sep=0pt,label={[label distance=1.2mm] 18.0: $(0,0,2)$}] at (4.755,1.545) {};
  
  \node[circle,draw,fill=blue,minimum size=3pt] (v2) at (4.045,2.939) {};
  \node[font=\scriptsize,inner sep=0pt,label={[label distance=1.2mm] 36.0: $(0,1,0)$}] at (4.045,2.939) {};
  
  \node[circle,draw,fill=green,minimum size=3pt] (v3) at (2.939,4.045) {};
  \node[font=\scriptsize,inner sep=0pt,label={[label distance=1.2mm] 54.0: $(0,1,1)$}] at (2.939,4.045) {};
  
  \node[circle,draw,fill=green,minimum size=3pt] (v4) at (1.545,4.755) {};
  \node[font=\scriptsize,inner sep=0pt,label={[label distance=1.2mm] 72.0: $(0,1,2)$}] at (1.545,4.755) {};
  
  \node[circle,draw,fill=blue,minimum size=3pt] (v5) at (0,5) {};
  \node[font=\scriptsize,inner sep=0pt,label={[label distance=1.2mm] 90.0: $(0,2,0)$}] at (0,5) {};
  
  \node[circle,draw,fill=green,minimum size=3pt] (v6) at (-1.545,4.755) {};
  \node[font=\scriptsize,inner sep=0pt,label={[label distance=1.2mm] 108.0: $(0,2,1)$}] at (-1.545,4.755) {};
  
  \node[circle,draw,fill=green,minimum size=3pt] (v7) at (-2.939,4.045) {};
  \node[font=\scriptsize,inner sep=0pt,label={[label distance=1.2mm] 126.0: $(0,2,2)$}] at (-2.939,4.045) {};
  
  \node[circle,draw,fill=blue,minimum size=3pt] (v8) at (-4.045,2.939) {};
  \node[font=\scriptsize,inner sep=0pt,label={[label distance=1.2mm] 144.0: $(1,0,0)$}] at (-4.045,2.939) {};
  
  \node[circle,draw,fill=green,minimum size=3pt] (v9) at (-4.755,1.545) {};
  \node[font=\scriptsize,inner sep=0pt,label={[label distance=1.2mm] 162.0: $(1,0,1)$}] at (-4.755,1.545) {};
  
  \node[circle,draw,fill=green,minimum size=3pt] (v10) at (-5,0) {};
  \node[font=\scriptsize,inner sep=0pt,label={[label distance=1.2mm] 180.0: $(1,0,2)$}] at (-4.8,0) {};
  
  \node[circle,draw,fill=green,minimum size=3pt] (v11) at (-4.755,-1.545) {};
  \node[font=\scriptsize,inner sep=0pt,label={[label distance=1.2mm] -162.0: $(1,1,0)$}] at (-4.755,-1.545) {};
  
  \node[circle,draw,fill=blue,minimum size=3pt] (v15) at (-4.045,-2.939) {};
  \node[font=\scriptsize,inner sep=0pt,label={[label distance=1.2mm] -144.0: $(1,2,1)$}] at (-4.045,-2.939) {};
  
  \node[circle,draw,fill=blue,minimum size=3pt] (v16) at (-2.939,-4.045) {};
  \node[font=\scriptsize,inner sep=0pt,label={[label distance=1.2mm] -126.0: $(1,2,2)$}] at (-2.939,-4.045) {};
  
  \node[circle,draw,fill=blue,minimum size=3pt] (v17) at (-1.545,-4.755) {};
  \node[font=\scriptsize,inner sep=0pt,label={[label distance=1.2mm] -108.0: $(2,0,0)$}] at (-1.545,-4.755) {};
  
  \node[circle,draw,fill=green,minimum size=3pt] (v18) at (-0,-5) {};
  \node[font=\scriptsize,inner sep=0pt,label={[label distance=1.2mm] -90.0: $(2,0,1)$}] at (-0,-5) {};
  
  \node[circle,draw,fill=green,minimum size=3pt] (v19) at (1.545,-4.755) {};
  \node[font=\scriptsize,inner sep=0pt,label={[label distance=1.2mm] -72.0: $(2,0,2)$}] at (1.545,-4.755) {};
  
  \node[circle,draw,fill=blue,minimum size=3pt] (v21) at (2.939,-4.045) {};
  \node[font=\scriptsize,inner sep=0pt,label={[label distance=1.2mm] -54.0: $(2,1,1)$}] at (2.939,-4.045) {};
  
  \node[circle,draw,fill=blue,minimum size=3pt] (v22) at (4.045,-2.939) {};
  \node[font=\scriptsize,inner sep=0pt,label={[label distance=1.2mm] -36.0: $(2,1,2)$}] at (4.045,-2.939) {};
  
  \node[circle,draw,fill=green,minimum size=3pt] (v23) at (4.755,-1.545) {};
  \node[font=\scriptsize,inner sep=0pt,label={[label distance=1.2mm] -18.0: $(2,2,0)$}] at (4.755,-1.545) {};

 \draw[thick] (v25) -- (v12)  (v14) -- (v20) (v13) -- (v24);
  

\draw[dashed,rounded corners] (-7,-6) rectangle (7,6);
\node[draw=none,rectangle,inner sep=1pt,font=\small] at (0,6.3) {Component 1};

\draw[thick] (v0) -- (v1) (v0) -- (v2) (v0) -- (v3) (v0) -- (v4)  (v0) -- (v5) (v0) -- (v6) (v0) -- (v7) (v0) -- (v8) (v0) -- (v9)  (v0) -- (v10) (v0) -- (v17)  (v0) -- (v18) (v0) -- (v19);
  
\draw[thick] (v1) -- (v2) (v1) -- (v3)  (v1) -- (v4)  (v1) -- (v5)  (v1) -- (v6) (v1) -- (v7) (v1) -- (v8) (v1) -- (v9) (v1) -- (v10) (v1) -- (v17) (v1) -- (v18)  (v1) -- (v19);
  
\draw[thick] (v2) -- (v3) (v2) -- (v4)  (v2) -- (v5) (v2) -- (v6)  (v2) -- (v7);
  
\draw[thick] (v3) -- (v4)  (v3) -- (v5)  (v3) -- (v6) (v3) -- (v7) (v3) -- (v10) (v3) -- (v11) (v3) -- (v15) (v3) -- (v18) (v3) -- (v22) (v3) -- (v23);
  
\draw[thick] (v4) -- (v5)  (v4) -- (v6)  (v4) -- (v7) (v4) -- (v9) (v4) -- (v11) (v4) -- (v16) (v4) -- (v19) (v4) -- (v21)  (v4) -- (v23);
  
\draw[thick] (v5) -- (v6)  (v5) -- (v7)  (v6) -- (v7) (v6) -- (v9) (v6) -- (v11) (v6) -- (v16) (v6) -- (v19)  (v6) -- (v21) (v6) -- (v23);
  
\draw[thick] (v7) -- (v10)  (v7) -- (v11) (v7) -- (v15) (v7) -- (v18)  (v7) -- (v22) (v7) -- (v23);
  
\draw[thick] (v8) -- (v9)   (v8) -- (v10) (v8) -- (v17) (v8) -- (v18) (v8) -- (v19) (v9) -- (v10)  (v9) -- (v11)  (v9) -- (v16)  (v9) -- (v17) (v9) -- (v18)  (v9) -- (v19) (v9) -- (v21)  (v9) -- (v23);
  
\draw[thick] (v10) -- (v11)  (v10) -- (v15) (v10) -- (v17) (v10) -- (v18) (v10) -- (v19) (v10) -- (v22)  (v10) -- (v23);
  
\draw[thick] (v11) -- (v15)  (v11) -- (v16) (v11) -- (v18)  (v11) -- (v19) (v11) -- (v21) (v11) -- (v22) (v11) -- (v23);
 
\draw[thick] (v15) -- (v18)  (v15) -- (v22) (v15) -- (v23)  (v16) -- (v19) (v16) -- (v21)  (v16) -- (v23)  (v17) -- (v18)  (v17) -- (v19);
  
\draw[thick] (v18) -- (v19) (v18) -- (v22)  (v18) -- (v23)  (v19) -- (v21) (v19) -- (v23)  (v21) -- (v23) (v22) -- (v23);
\end{tikzpicture}
\quad
\caption{The solvable graph $\Gamma_\mathfrak{S}(\mathfrak{sl}_2(\F_3))$}
\label{sl2F3-solvable-graph}
\end{center}
\end{figure}
\end{example}

The graph has $26$ vertices and decomposes into four connected components. The three degree classes visible in the picture reflect the spectral type of the matrices: those with no eigenvalues, one eigenvalue, or two eigenvalues. This matches the decomposition predicted by Theorem \ref{sl2-degree-seq}.

\quad

Using SageMath and GAP we get the following table:
 
\begin{table}[H]
\centering
\caption{Degree sequences of $\Gamma_{\mathfrak S} (\mathfrak{sl}_2(\F_q))$}
\renewcommand{\arraystretch}{1.25}
\begin{tabular}{c|l}
\hline
$q$ & Degree sequence \\ \hline 
$3$  & $\underbrace{13,\ldots, 13}_{12}$, $\underbrace{7,\ldots, 7}_{8}$, $\underbrace{1,\ldots, 1}_{6}$\\[2pt] \hline
$5$  & $\underbrace{43,\ldots, 43}_{60}$, $\underbrace{23,\ldots, 23}_{24}$, $\underbrace{3,\ldots, 3}_{40}$\\[2pt] \hline
$7$  & $\underbrace{89,\ldots, 89}_{168}$, $\underbrace{47,\ldots, 47}_{48}$, $\underbrace{5,\ldots, 5}_{126}$\\[2pt] \hline
$11$ & $\underbrace{229,\ldots, 229}_{660}$, $\underbrace{119,\ldots, 119}_{120}$, $\underbrace{9,\ldots, 9}_{550}$\\ \hline
\vdots  & \vdots \\ 
\end{tabular}
\end{table}


\quad

\begin{theorem}\label{sl2-degree-seq}
For $q>2$ the degree sequence of $\Gamma_{\mathfrak S}\,(\mathfrak{sl}_2(\F_q))$ is:
\begin{equation*}
(\underbrace{2q^{2}-q-2}_{\frac{q(q^2-1)}{2}-\text{times}}, \ \underbrace{q^{2}-2}_{(q^2-1)}, \ \underbrace{q-2}_{\frac{q(q-1)^2}{2}}),
\end{equation*}
and the graph is non-connected for $q>2$.
\end{theorem}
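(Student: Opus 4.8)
The plan is to reduce the whole statement to computing, for each nonzero $x$, the cardinality of $\sol_L(x)$. Since $\ch F\neq 2$ forces $\sol(\mathfrak{sl}_2(\F_q))=0$, the vertex set is $L\setminus\{0\}$, and $x$ is adjacent precisely to the elements of $\sol_L(x)\setminus\{0,x\}$; hence $\deg(x)=|\sol_L(x)|-2$. First I would partition the nonzero elements of $\mathfrak{sl}_2(\F_q)$ according to the conjugation-invariant quantity $D=a^2+bc=-\det x$ for $x=\bigl(\begin{smallmatrix}a&b\\ c&-a\end{smallmatrix}\bigr)$ into three families: $D$ a nonzero square (two distinct eigenvalues in $\F_q$, the \emph{split} case), $D=0$ (a single repeated eigenvalue $0$, the \emph{nilpotent} case), and $D$ a non-square (no eigenvalue in $\F_q$, the \emph{anisotropic} case). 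These are exactly the three spectral types appearing in Example~\ref{Example4.2}.

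The structural heart is to describe the maximal solvable subalgebras and to invoke the recorded fact that $\sol_L(x)$ is the union of all maximal solvable subalgebras containing $x$. Since in characteristic $\neq 2$ every nonzero element of $\mathfrak{sl}_2(\F_q)$ has a one-dimensional centralizer, $L$ has no two-dimensional abelian subalgebra, so any two-dimensional subalgebra $B$ is non-abelian with $[B,B]=Fn$; writing $B=Fu+Fn$ with $[u,n]=\lambda n$, $\lambda\neq 0$, the element $n$ is forced to be nilpotent (as $\mathrm{ad}_u$ is traceless yet has the nonzero eigenvalue $\lambda$), and a one-line matrix computation shows $u$ preserves the unique eigenline $\ell=\ker n$, whence $B$ is the line-stabilizer $B_\ell$. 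Thus the two-dimensional subalgebras, each automatically a maximal solvable subalgebra of size $q^2$, are exactly the $q+1$ Borels $B_\ell$, and $x\in B_\ell$ if and only if $\ell$ is an eigenline of $x$. Counting eigenlines then shows a split $x$ lies in exactly two Borels with $B_{\ell_1}\cap B_{\ell_2}=Fx$ of size $q$, a nilpotent $x$ in exactly one, and an anisotropic $x$ in none, so that for anisotropic $x$ the only maximal solvable overalgebra is $Fx$ itself.

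Feeding this into $\sol_L(x)=\bigcup_\ell B_\ell$ with inclusion--exclusion gives $|\sol_L(x)|=2q^2-q,\ q^2,\ q$ in the split, nilpotent, and anisotropic cases, hence degrees $2q^2-q-2$, $q^2-2$, and $q-2$ respectively. To complete the degree sequence I would count each family, either by solving $a^2+bc=D$ directly (a nondegenerate ternary quadratic form, whose solution counts depend only on whether $D$ is zero, a nonzero square, or a non-square) or by the eigenline bookkeeping already set up: there are $\frac{q(q^2-1)}{2}$ split elements, $q^2-1$ nilpotent elements, and $\frac{q(q-1)^2}{2}$ anisotropic elements, the three multiplicities summing to $q^3-1$ as a cross-check.

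For non-connectedness, the key observation is that an anisotropic $x$ has $\sol_L(x)=Fx$, so every neighbour of $x$ is a nonzero scalar multiple of $x$; therefore the $q-1$ nonzero points of each anisotropic line span a connected component isomorphic to $K_{q-1}$, disjoint from the split and nilpotent vertices (which lie on different lines and have neighbours off their own line). Since $q>2$ guarantees the existence of a non-square, at least one such anisotropic component coexists with the non-empty split part, and the graph is disconnected. The main obstacle is the classification in the second paragraph: proving over the (non-algebraically-closed) finite field $\F_q$ that the only two-dimensional subalgebras are the line-stabilizers, and pinning down exactly how many of them contain a prescribed $x$. Once this Borel/eigenline dictionary is established, the cardinality computations, the degree sequence, and the disconnectedness are all routine.
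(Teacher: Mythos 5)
Your proposal is correct and follows essentially the same route as the paper's proof: partition the nonzero elements of $\mathfrak{sl}_2(\F_q)$ by spectral type via $a^2+bc$, count each class, and compute degrees from the two-dimensional solvable subalgebras containing each element, with disconnectedness coming from the eigenvalue-free vertices. The only difference is that you explicitly justify the Borel/eigenline dictionary (that the two-dimensional subalgebras are exactly the $q+1$ line-stabilizers and that $x$ lies in as many of them as it has eigenlines), a step the paper asserts without proof, so your write-up is if anything more complete.
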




\begin{proof}  
Consider the algebra acting on itself. A typical element of $L=\mathfrak{sl}_2(\F_q)$ can be represented by a matrix of the form $\left(\begin{matrix} a&b\\c&-a\end{matrix}\right)$, where $(a,b,c)\neq (0,0,0)$. This matrix has characteristic polynomial $\lambda^2-a^2-bc$. There are three possibilities for eigenvalues: there can be none, one or two. We'll consider each of these possibilities in turn.
\medskip

\noindent {\bf No eigenvalues:} This occurs if $a^2+bc$ is not a square. Each of $a,b,c$ can take q values, except that each of $b,c$ must be non-zero, so there are $q(q-1)^2$ possibilities for $a^2+bc$. However, in $\F_q$, precisely half of them will be quadratic residues, so there are $\frac{q(q-1)^2}{2}$ vertices represented by elements of this kind.
\par

If $x$ is such a vertex, it can't belong to a two-dimensional subalgebra. Hence, it can only be attached to $\alpha x$ where $\alpha\neq 0,1$. This leaves $q-2$ possibilities.
\medskip

\noindent {\bf One eigenvalue:} This occurs when $a^2+bc=0$. If $c=0$, then $a=0$ and there are $q-1$ choices for $b$. Similarly, if $b=0$, there are $q-1$ choices for $c$. If $b,c\neq 0$, there are $(q-1)^2$ choices for $bc$, half of which are quadratic residues. For each quadratic residue, there are two choices for $a$, so there are $(q-1)^2$ possibilities here. The total number of such vertices, therefore, is $(q-1)^2+2(q-1)=q^2-1$.
\par

If $x$ is such a vertex, it belongs to precisely one two-dimensional subalgebra, spanned by $x,y$, say. Then $x$ can only be attached to $\alpha x+\beta y$ where $(\alpha, \beta)\neq (0,0), (1,0)$. This leaves $q^2-2$ possibilities.
\medskip

\noindent {\bf Two eigenvalues:} This total number of vertices in the graph is $q^3-1$, so the number remaining for this case is $$q^3-1-\frac{q(q-1)^2}{2}-(q^2-1)=\frac{(q-1)(q^2+q)}{2}=\frac{q(q^2-1)}{2}.$$
\par

If $x$ is such a vertex, it belongs to precisely two two-dimensional subalgebras $M_1, M_2$ spanned by $x,y$ and $x,w$, respectively. It can be attached to any vertex in $M_1$, $\alpha x+\beta y$, other than those for which $(\alpha,\beta)=(0,0),(1,0)$, and there are $q^2-2$ of these, as in the previous case. It can also be attached to any vertex in $M_2$, $\gamma x+ \delta w$, other than $0$ and those already counted; that is, for which $\beta=0$, and there are $q^2-q$ of these. Hence, there are $q^2-2+q^2-q=2q^2-q-2$ possibilities.
\medskip

None of the vertices corresponding to the first case can be connected to any of the vertices in the other two cases, so the graph is non-connected.
\end{proof}

This result shows that the solvable graph of $\mathfrak{sl}_2(\F_q)$ breaks into multiple components, with vertices grouped according to the number of eigenvalues of the corresponding matrix (none, one, or two). In particular, vertices with no eigenvalues are only linked to scalar multiples of themselves, producing isolated components. This contrasts with the group-theoretic setting, where solvable graphs are always connected, and highlights a genuinely new Lie-theoretic phenomenon.

\quad

Using SageMath and GAP we get the following table:
 
\begin{table}[H]
\centering
\caption{Degree sequences of $\Gamma_{\mathfrak S}(\mathfrak{gl}_2(\F_q))$}
\renewcommand{\arraystretch}{1.25}
\begin{tabular}{c|l}
\hline
$q$ & Degree sequence \\ \hline
$3$  & $\underbrace{41,\ldots, 41}_{36}$, $\underbrace{23,\ldots, 23}_{24}$, $\underbrace{5,\ldots, 5}_{18}$\\[2pt] \hline
$5$  & $\underbrace{219,\ldots, 219}_{300}$, $\underbrace{119,\ldots, 119}_{120}$, $\underbrace{19,\ldots, 19}_{200}$\\[2pt] \hline

$7$  & $\underbrace{629,\ldots, 629}_{1176}$, $\underbrace{335,\ldots, 335}_{336}$, $\underbrace{41,\ldots, 41}_{882}$\\[2pt] \hline

$11$ & $\underbrace{2529,\ldots, 2529}_{7260}$, $\underbrace{1319,\ldots, 1319}_{1320}$, $\underbrace{109,\ldots, 109}_{6050}$\\ \hline
\vdots & \vdots \\ 
\end{tabular}
\end{table}



\begin{theorem}\label{Th4.4}
For $q>2$ the degree sequence of $\Gamma_{\mathfrak S}\,(\mathfrak{gl}_2 (\F_q))$ is 
\begin{equation*}
(\underbrace{2q^3-q^2-q-1}_{q^2(q^2-1)/2-\text{times}}, \underbrace{q^3-q-1}_{q^3-q}, \underbrace{q^2-q-1}_{q^2(q-1)^2/2})
\end{equation*}
\end{theorem}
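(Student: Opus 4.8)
The plan is to reduce everything to the already-established degree sequence of $\Gamma_{\mathfrak S}(\mathfrak{sl}_2(\F_q))$ in Theorem~\ref{sl2-degree-seq}, exploiting the central scalar subalgebra. Throughout I work in the regime $\ch(\F_q)\neq 2$ (equivalently $q$ odd), which is where the quadratic-residue counts behind Theorem~\ref{sl2-degree-seq} apply and where $\sol(\mathfrak{gl}_2(\F_q))=Z(\mathfrak{gl}_2(\F_q))$. Write $Z:=Z(\mathfrak{gl}_2(\F_q))$ for the one-dimensional space of scalar matrices, so the graph has $q^4-q$ vertices, namely the non-scalar matrices. The traceless-part projection $\pi:\mathfrak{gl}_2(\F_q)\to\mathfrak{gl}_2(\F_q)/Z$ is a Lie-algebra homomorphism with kernel $Z$, and since $[X,X']$ is always traceless one checks directly that $\mathfrak{gl}_2(\F_q)/Z\cong\mathfrak{sl}_2(\F_q)$; under this identification $\pi$ maps $X=Y+zI$ (with $Y\in\mathfrak{sl}_2(\F_q)$, $z\in\F_q$) to $Y$, so each nonzero $Y$ has exactly $q$ preimages, all of which are vertices.

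The key reduction is that adjacency in $\mathfrak{gl}_2$ is detected on $\mathfrak{sl}_2$. Since $Z\subseteq\sol(\mathfrak{gl}_2(\F_q))$, Lemma~\ref{Lie-2.11}(3) gives $\sol_{\mathfrak{sl}_2}(\pi(X))=\sol_{\mathfrak{gl}_2}(X)/Z$, equivalently $\sol_{\mathfrak{gl}_2}(X)=\pi^{-1}\bigl(\sol_{\mathfrak{sl}_2}(\pi(X))\bigr)$, so counting fibres yields $|\sol_{\mathfrak{gl}_2}(X)|=q\,|\sol_{\mathfrak{sl}_2}(\pi(X))|$. To pass from the solvabilizer to the degree, observe that $Z\subseteq\sol_{\mathfrak{gl}_2}(X)$ (scalars commute with $X$) and that $X\in\sol_{\mathfrak{gl}_2}(X)\setminus Z$; the neighbours of $X$ are precisely the elements of $\sol_{\mathfrak{gl}_2}(X)$ other than $X$ itself and the non-vertex scalars. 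Hence
\[
\deg(X)=|\sol_{\mathfrak{gl}_2}(X)|-|Z|-1=q\,|\sol_{\mathfrak{sl}_2}(\pi(X))|-q-1,
\]
which shows the degree of $X$ is governed entirely by the spectral type of its traceless part $\pi(X)$, not of $X$ itself.

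From the proof of Theorem~\ref{sl2-degree-seq} I would read off the three solvabilizer sizes in $\mathfrak{sl}_2(\F_q)$: an element with no eigenvalues lies only on its own line, giving $|\sol_{\mathfrak{sl}_2}(Y)|=q$; one with a single eigenvalue lies in a unique two-dimensional solvable subalgebra, giving $q^2$; and one with two eigenvalues lies in exactly two such subalgebras meeting in the line $\F_q Y$, giving $2q^2-q$. Substituting into the displayed formula produces the degrees $q^2-q-1$, $q^3-q-1$, and $2q^3-q^2-q-1$. For the multiplicities, since each nonzero $Y$ contributes exactly $q$ vertices, I multiply the $\mathfrak{sl}_2$ type-counts $\tfrac{q(q-1)^2}{2}$, $q^2-1$, $\tfrac{q(q^2-1)}{2}$ by $q$, obtaining $\tfrac{q^2(q-1)^2}{2}$, $q^3-q$, and $\tfrac{q^2(q^2-1)}{2}$, and finally verify as a sanity check that these sum to $q^4-q$.

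The main obstacle is bookkeeping rather than conceptual: one must argue carefully that the degree is controlled by the type of $\pi(X)$ and not of $X$, and correctly account for the $q$ excluded central elements together with the vertex $X$ itself when converting the solvabilizer cardinality into a degree. A secondary point to nail down is the isomorphism $\mathfrak{gl}_2(\F_q)/Z\cong\mathfrak{sl}_2(\F_q)$ and the hypotheses of Lemma~\ref{Lie-2.11}(3), i.e. that $Z=\sol(\mathfrak{gl}_2(\F_q))$ genuinely holds in the relevant characteristic; the quadratic-residue counts underlying the $\mathfrak{sl}_2$ type-sizes require $q$ odd, consistent with the standing hypothesis $q>2$ in the companion theorem.
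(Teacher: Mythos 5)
Your proposal is correct: the degree values and multiplicities you obtain all match the statement, and your sanity check $\tfrac{q^2(q-1)^2}{2}+(q^3-q)+\tfrac{q^2(q^2-1)}{2}=q^4-q$ confirms the vertex count. Your route is, however, packaged differently from the paper's. The paper also starts from the decomposition $\mathfrak{gl}_2(\F_q)=\mathfrak{sl}_2(\F_q)\oplus Z$, but then works entirely inside $\mathfrak{gl}_2$: for each of the three types it identifies the maximal solvable subalgebras containing $x$ by hand (the plane $\F x+Z$, one three-dimensional subalgebra $\langle x,y,z\rangle$, or two three-dimensional subalgebras $M_1,M_2$ meeting in $\F x+Z$) and counts admissible neighbours $\alpha x+\beta y+\gamma z$ case by case, excluding the scalars and $x$ itself each time. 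You instead prove the single uniform identity $\deg(X)=q\,\lvert\sol_{\mathfrak{sl}_2}(\pi(X))\rvert-q-1$ by invoking Lemma~\ref{Lie-2.11}(3) for the quotient by $Z$, and then read the three solvabilizer sizes $q$, $q^2$, $2q^2-q$ off the proof of Theorem~\ref{sl2-degree-seq}. This buys a cleaner reduction (no repeated inclusion--exclusion in $\mathfrak{gl}_2$, and the fact that the degree depends only on $\pi(X)$ is automatic rather than re-verified per case), at the cost of leaning on the quotient lemma and on $\sol(\mathfrak{gl}_2(\F_q))=Z$, which you correctly flag as needing odd $q$; the paper's hands-on count is more self-contained but repeats essentially the same arithmetic three times. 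The two arguments compute the same quantity $\lvert\sol_{\mathfrak{gl}_2}(x)\rvert-q-1$ and are fully consistent.
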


\quad

\begin{proof}  
Write $L= \mathfrak{gl}_2(\F_q)= \mathfrak{sl}_2(\F_q)\oplus Z(L)$, where $Z(L)$ is the centre of $L$, that is, the scalar matrices in $L$.  Then every element of $x\in L$ belongs to a solvable subalgebra of dimension at least two, namely $\F x+Z(L)$. 
\par

Suppose first that $x$ belongs to a solvable subalgebra of dimension two, but not three. Then it is of the form $y + z$ where $y$ belongs to a solvable subalgebra of dimension at most one in $sl_2(\F_q)$ and $z \in Z(L)$. Using Table 2 we have that there are $q^2(q-1)^2/2$ such $x$. Such an $x$ can only be connected to any vertex of the form $\alpha x + \beta z$ provided that $(\alpha,\beta) \neq (0,\beta), (1,0)$ and thus to $q^2-q-1$ other vertices.
\par

Next, suppose that $x$ belongs to just one solvable subalgebra of dimension three. Then it is of the form $y + z$ where $y$ belongs to a solvable subalgebra of dimension at most two in $sl_2(\F_q)$ and $z \in Z(L)$. Using Table 2, we have that there are $q(q^2-1)$ such $x$. Such an $x$ can only be connected to any vertex of the form $\alpha x+\beta y + \gamma z$ provided that $(\alpha,\beta,\gamma) \neq (0,0,\beta), (1,0,0)$ and thus to $q^3-q-1$ other vertices.
\par

Finally, suppose that $x$ belongs to more than one solvable subalgebra of dimension three. Then it is of the form $y+z$ where $y$ belongs to more than one solvable subalgebra of dimension at most two in $\mathfrak{sl}_2(\F_q)$ and $z \in Z(L)$. 
Using Table 2, we have that there are $q^2(q^2-1)/2$ such $x$. If $x$ is such a vertex, it belongs to two three-dimensional subalgebras $M_1, M_2$ spanned by $x,y,z$ and $x,w,z$, respectively. It can be attached to any vertex in $M_1$, $\alpha x+\beta y+\gamma z$, other than those for which $(\alpha,\beta,\gamma)=(0,0,\gamma),(1,0,0)$, and there are $q^3-q-1$ of these. It can also be attached to any vertex in $M_2$, $\delta x+\epsilon w +\zeta z$, other than those already counted; that is, for which $\epsilon =0$, and there are $q^3-q^2$ of these. Hence, there are $q^3-q-1+q^3-q^2=2q^3-q^2-q-1$ possibilities.
\end{proof}

This formula reveals that the solvable graph of $\mathfrak{gl}_2(\F_q)$ naturally separates into three classes of vertices, according to the dimension of the smallest solvable subalgebra they belong to. The three different degree values reflect the different ``spectral types'' that appear inside $\mathfrak{sl}_2(\F_q)$, shifted by the presence of the one-dimensional centre of $\mathfrak{gl}_2(F_q)$. 
Thus, degree distributions provide a combinatorial fingerprint of the underlying Lie algebra structure.

\quad


Finally, we present the complement of the soluble graph and an example of it.  This complement condition immediately implies, for example, that the non-soluble graph of $\Gamma_\mathfrak{S}(\mathfrak{sl}_2(\F_q))$ is connected. 

\begin{definition}
Let $L$ be a finite-dimensional Lie algebra. The non-solvable graph of $L$, denoted by $\Gamma_{\mathfrak{S}}(L)^c$, is a simple undirected graph whose vertex set is $L\setminus \sol(L)$, and two vertices $x$ and $y$ are adjacent if and only if $\langle x, y\rangle$ is a non-solvable subalgebra of $L$.    
\end{definition}

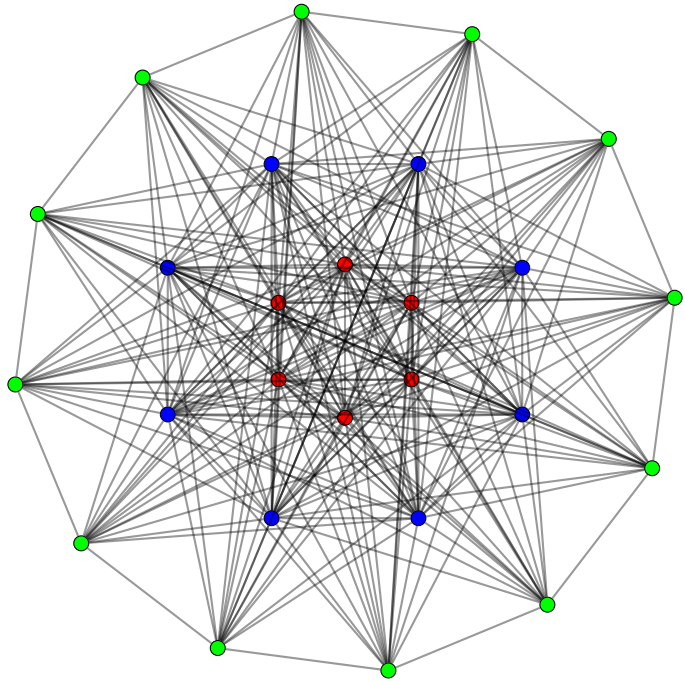
\begin{figure}[H]
    \centering
\tikzset{  v/.style={circle, draw, inner sep=2pt, minimum size=5pt},
  edg/.style={draw, thick, opacity=0.4},
  every label/.style={font=\scriptsize},}
\begin{tikzpicture}[scale=1.7]

\def\Rin{0.6}
\def\Rmid{1.5}
\def\Rout{2.6}

\def\OffIn{0}         
\def\OffMid{22.5}     
\def\OffOut{7.5}      

\foreach \i [evaluate=\i as \ang using {\OffIn+90+60*\i}] in {1,...,6} {
  \node[v, fill=red, label={[label distance=2pt]\ang: }] (c\i) at (\ang:\Rin) {};
}
\foreach \i [evaluate=\i as \ang using {\OffMid+45+45*\i}] in {1,...,8} {
  \node[v, fill=blue, label={[label distance=2pt]\ang: }] (m\i) at (\ang:\Rmid) {};
}
\foreach \i [evaluate=\i as \ang using {\OffOut+30+30*\i}] in {1,...,12} {
  \node[v, fill=green, label={[label distance=2pt]\ang:  }] (o\i) at (\ang:\Rout) {};
}

\foreach \i in {1,...,6} {
  \foreach \j in {1,...,6} {
    \ifnum\i<\j
      \draw[edg] (c\i) -- (c\j);
    \fi
  }
}
\foreach \i in {1,...,6} {
  \foreach \j in {1,...,8} {
    \draw[edg] (c\i) -- (m\j);
  }
}
\foreach \i in {1,...,6} {
  \foreach \j in {1,...,12} {
    \draw[edg] (c\i) -- (o\j);
  }
}
\foreach \i in {1,...,8} {
  \foreach \j in {1,...,8} {
    \ifnum\i<\j
      \draw[edg] (m\i) -- (m\j);
    \fi
  }
}
\foreach \i in {1,...,8} {
  \foreach \j in {1,...,12} {
    \draw[edg] (m\i) -- (o\j);
  }
}
\foreach \i in {1,...,12} {
  \pgfmathtruncatemacro{\next}{mod(\i,12)+1}
  \draw[edg] (o\i) -- (o\next);
}

\end{tikzpicture}

\caption{The non-solvable graph $\Gamma_\mathfrak{S}(\mathfrak{sl}_2(\F_3))$}
    \label{fig:placeholder}
\end{figure}

In contrast with Figure~1, the complement graph is connected, showing that the non-solvable structure of $\mathfrak{sl}_2(\F_3)$ is more tightly linked than the solvable one. Green, blue,  and red vertices correspond to the three degree classes described in Example \ref{Example4.2}.

\section{Algorithms}

\begin{algorithm}[H]
\caption{An algorithm to generate the solvable and non-solvable graphs of a finite  matrix Lie algebra $L$}
    \begin{algorithmic}
        \State $L \gets \text{Lie Algebra in GAP}$
        \State $elements \gets L.\text{AsList}()$
        \State $Sol \gets \text{solvabilizer}(L)$
        \State $nodes \gets [x \in elements \mid x \notin Sol]$
        \Statex
        \State $G \gets \text{Graph}()$
        \State $G. \text{add\_vertices}(nodes)$
        \Statex
        
        \For{$i \gets 0$ to $\text{length}(nodes)$}
            \For{$j \gets i + 1$ to $\text{length}(nodes)$}
                \State $S \gets L.\text{Subalgebra}([nodes[i], nodes[j]])$
                \If{$S.\text{Issolvable}()$}
                    \State $G.\text{add\_edge}(nodes[i], nodes[j])$
                \EndIf
            \EndFor
        \EndFor
        \Statex
        \State \# Show the graph
        \State $G.\text{show}(\text{figsize}=[10, 10], \text{vertex\_labels}=True)$
        \Statex
        \State \# Connected components
        \State $components \gets \text{G.\text{connected\_components}()}$
        \State \textbf{Print:} $G.\text{is\_connected()}$
        \Statex
        \State \# Graph's complement 
        \State $G_{\text{complement}} \gets G.\text{complement()}$
        \State $G_{\text{complement}}.\text{show}(\text{figsize}=[5, 5], \text{vertex\_labels}=True)$
        \State \textbf{Print:} $G_{\text{complement}}.\text{is\_connected()}$
    \end{algorithmic}
\end{algorithm}


    

\bibliographystyle{plane}

\end{document}